\numberwithin{equation}{section}
\theoremstyle{plain}
\newtheorem{thm}{\protect\theoremname}[section]
\theoremstyle{plain}
\newtheorem{prop}{\protect\propositionname}[section]
\theoremstyle{plain}
\newtheorem{lem}{\protect\lemmaname}[section]
  \newenvironment{proof}[1][\proofname]{\par
    \normalfont\topsep6\p@\@plus6\p@\relax
    \trivlist
    \itemindent\parindent
    \item[\hskip\labelsep
          \scshape
      #1]\ignorespaces
  }{%
    \endtrivlist\@endpefalse
  }
  \providecommand{\proofname}{Proof}
\theoremstyle{remark}
\newtheorem{rem}{\protect\remarkname}[section]
\theoremstyle{plain}
\newtheorem{defn}{\protect\definitionname}[section]
\theoremstyle{plain}
\newtheorem{proposition}{\protect\propositionname}[section]
\date{}
\providecommand{\lemmaname}{Lemma}
\providecommand{\propositionname}{Proposition}
\providecommand{\remarkname}{Remark}
\providecommand{\theoremname}{Theorem}
\providecommand{\definitionname}{Definition}
\providecommand{\propositionname}{Proposition}
\begin{document}
\title{Parabolic Anderson Model in Hyperbolic Spaces and Phase Transition}

%\author{Xi Geng\thanks{University of Melbourne} }

\author{Xi Geng\thanks{School of Mathematics and Statistics, The University of Melbourne, Melbourne, VIC 3010. Email: xi.geng@unimelb.edu.au. XG gratefully acknowledges the support by ARC grant DE210101352.} \ 
and Cheng Ouyang \thanks{Dept. Mathematics, Statistics and Computer Science, University of Illinois at Chicago, Chicago, IL 60607. Email: couyang@uic.edu. }
}

\maketitle

\begin{abstract}
    Consider a Parabolic Anderson model (PAM) with Gaussian noise that is white in time and colored in space, where the spatial correlation decays polynomially with order $\alpha$. In Euclidean spaces with dimension greater than $2$, it is well-understood that the critical value for $\alpha$ is $2$. Specifically, for $\alpha<2$, the second moment of the solution grows exponentially over time, while for $\alpha>2$, there is a phase transition, from the second moment being uniformly bounded in time to exhibiting exponential growth in time when the inverse temperature increases. This critical behavior arises from the fact that in Euclidean space, Brownian motion tends to infinity at a speed of $\sqrt{t}$.

The present work explores the PAM on a hyperbolic space. Given that Brownian motion in a hyperbolic space travels at a speed of $t$, one expects that $\alpha=1$ would be the critical value for the above phenomena. We confirm that this intuition is indeed correct. Furthermore, we uncover a novel phase for $\alpha<1$ in which the second moment explodes sub-exponentially, distinct from the behavior observed in Euclidean space.

\end{abstract}

\tableofcontents

\section{Introduction and main results}

Let $M\triangleq\mathbb{H}^{d}$ denote the $d$-dimensional space-form
of curvature $K\equiv-1$. We consider the parabolic Anderson model (PAM) on $M$
\begin{equation}
\begin{cases}
\partial_{t}u(t,x)=\Delta u(t,x)+\beta u(t,x)\cdot \frac{\partial^2}{\partial t\partial x}{W}(t,x),\\
u(0,\cdot)\equiv1.
\end{cases}\label{eq:HypPAM}
\end{equation}
in the sense It\^{o}-Walsh \cite{Wa86}. Here $\Delta$ is the hyperbolic Laplacian, $\beta>0$ is the (inverse) temperature parameter, and
$W$ is a time-dependent Gaussian field on $M$ that is white in time and
colored in space with spatial covariance function $f:M\times M\rightarrow\mathbb{R}$,
i.e. 
\[
\mathbf{E}\big[W(s,x)W(t,y)\big]=(s\wedge t)f(x,y).
\]
Throughout our discussion, we assume that $f$ is uniformly bounded and vanishes when the (hyperbolic) distance $\rho(x,y)$ tends to infinity. 

Standard Wiener chaos expansion technique shows that the stochastic differential equation \eqref{eq:HypPAM} has a unique solution which can be written in the mild form as the solution to the following stochastic integral equation (see, e.g., \cite{BCO25}):
\begin{equation}\label{HypPAM-Mild}
u(t,x)=1+\int_0^t\int_Mp_{t-s}(x,y)u(s,y)W(ds,dy),
\end{equation}
where $p_t(x,y)$ is the heat kernel on $M$, and the stochastic integral is understood in the sense of It\^{o}-Walsh \cite{Wa86}. 

The parabolic Anderson model arises in a large number of diverse questions in probability theory and mathematical physics. For example, it gives rise to the free energy of the  directed polymer and to the Cole-Hopf solution of the KPZ equation \cite{ACQ11, Ka87, KPZ86}; it also has direct connections  with the stochastic Burger's equation \cite{CM94} and Majda's model of shear-layer flow in turbulent diffusion \cite{Mj93}.  In the last few years, there have been many advances in understanding numerous fundamental properties of the PAM on flat spaces (e.g. $M=\mathbb{R}^d$ and $M=\mathbb{T}^d)$, such as intermittency property (see, e.g., \cite{Ch15}, \cite{Kh14} and references tehrein), energy landscape and fluctuation \cite{BQS11, DGK23}. 

In particular, suppose $M=\mathbb{R}^d$ for $d\geq 3$ and the covariance function satisfies
\begin{align}\label{covar-Euclidean}
    f(x,y)=f(x-y)\asymp \frac{1}{|x-y|^\alpha}
\end{align}
for some $\alpha>0$ and all large $|x-y|$. The second moment of the solution $u$ exhibits the following large-time behavior (see, e.g., \cite{CK19}):
\begin{itemize}
    \item [\bf E-(i)] When $\alpha<2$, the second moment $\mathbf{E}\big[u(t,x)^2\big]$ blows up exponentially in time,
    $$\log\mathbf{E}\big[u(t,x)^2\big]\asymp t.$$
    
    \item [\bf E-(ii)] When $\alpha>2$, there is phase transition, that is, for all large $\beta$ one has 
     $$\log\mathbf{E}\big[u(t,x)^2\big]\asymp t;$$
     whereas  $\mathbf{E}\big[u(t,x)^2\big]$ is uniformly bounded in $t$ for all small $\beta$.
\end{itemize}
Here is a heuristic explanation for why $\alpha=2$ is critical. Feynman-Kac formula for the second moment (\cite{HNS11}) states that
\begin{align}\label{FK-moment-intro}
\mathbf{E}\big[u(t,x)^2\big]=\mathbb{E}\Big[\exp\Big(\beta^2\int_0^tf(B_s-\tilde{B}_s)ds\Big)\Big].
\end{align}
In the above, the expectation $\mathbb{E}$ is taken with respect to the randomness of two independent Brownian motions $B$ and $\tilde{B}$ starting from the same point $x$. Note that $B_s-\tilde{B}_s$ is a Euclidean Brownian motion; when $d\geqslant3$, it is transient and travels to infinity at a speed of $O(\sqrt{s})$. Hence, if one naively replaces $B_s-\tilde{B}_s$ by $\sqrt{s}$ for large $s$ in \eqref{FK-moment-intro} and employ \eqref{covar-Euclidean}, it is clear that $\alpha=2$ is a critical value: when $\alpha>2$ the time integral is uniformly bounded for all $t$, while when $\alpha<2$ the time integral is infinity when $t$ tends to infinity.

In the hyperbolic space $\mathbb{H}^d$, the Brownian motion is transient in all dimensions and goes to infinity at a speed of $O(t)$. The same heuristic argument above suggests $\alpha=1$ is critical in a hyperbolic space. Naturally, one asks whether this argument leads to the correct answer. Our analysis in this paper gives an affirmative answer to this question, and can be summarized in the following theorems.

\begin{thm}\label{thm1-intro} Let $\rho(x,y)$ be the hyperbolic distance between $x$ and $y$ on $\mathbb{H}^d$.
\begin{itemize}
    \item [(a)]
Suppose $\alpha>1$ and the covariance function $f$ satisfies
the following assumption:
\begin{equation}
\big|f(x,y)\big|\leqslant\frac{C}{\rho(x,y)^{\alpha}}\ \ \ \forall x,y:\rho(x,y)>R\label{eq:UpDecayAssump}
\end{equation}
with some given constants $C,R>0$. Then there exists
a constant $\beta_{0}>0$, such that 
\[
\sup_{\substack{t\geqslant0,x\in M}
}\mathbf{E}\big[u(t,x)^{2}\big]<\infty
\]
for all $\beta\in(0,\beta_{0})$.

\item [(b)]
On the other hand, suppose $\alpha<1$ and $f$ satisfies: 
\begin{equation}
f(x,y)\geqslant\frac{C}{\rho(x,y)^{\alpha}}\ \ \ \forall x,y:\rho(x,y)\geqslant R\label{eq:LowDecayCov}
\end{equation}
with suitable constants $C,R>0$. Then 
$\mathbf{E}[u(t,x)^2]$ blows up as $t\uparrow\infty$ for all $\beta>0$.
\end{itemize}
\end{thm}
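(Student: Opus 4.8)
I would start from the Feynman--Kac representation of the second moment --- the hyperbolic counterpart of \eqref{FK-moment-intro} --- which, writing $B,\tilde B$ for two independent Brownian motions on $\mathbb{H}^{d}$ (with generator $\Delta$) both started at $x$, reads
\begin{equation}
\mathbf{E}\big[u(t,x)^{2}\big]=\mathbb{E}\Big[\exp\Big(\beta^{2}\int_{0}^{t}f(B_{s},\tilde{B}_{s})\,ds\Big)\Big]\label{eq:HypFK-plan}
\end{equation}
and follows from the Wiener chaos expansion exactly as in the flat case. Everything then hinges on the \emph{distance process} $r_{s}\triangleq\rho(B_{s},\tilde{B}_{s})$; since the isometry group of $\mathbb{H}^{d}$ acts transitively, the law of $(r_{s})_{s\geqslant 0}$ does not depend on $x$, which explains why the base point is immaterial in part (a). An It\^{o} computation using $|\nabla\rho|\equiv 1$ and $\Delta_{x}\rho(x,y)=(d-1)\coth\rho(x,y)$ shows that, for $t>0$, $r_{t}$ solves $dr_{t}=2\,dW_{t}+2(d-1)\coth(r_{t})\,dt$ for a standard one-dimensional Brownian motion $W$; in particular $0$ is a non-accessible boundary, $r_{t}>0$ for all $t>0$, and since $\coth\geqslant 1$ the comparison theorem for one-dimensional SDEs yields the pathwise lower bound
\begin{equation}
r_{t}\ \geqslant\ 2W_{t}+2(d-1)t,\qquad t>0.\label{eq:radLB-plan}
\end{equation}
The linear term on the right of \eqref{eq:radLB-plan} is the exact avatar of ``Brownian motion in $\mathbb{H}^{d}$ escapes at speed $t$'', and all probabilistic input below is extracted from it via the Gaussian tail of $W$.

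For part (a) the plan is to pass to a radial majorant and then produce an exponential moment. Assumption \eqref{eq:UpDecayAssump} and $\|f\|_{\infty}<\infty$ give $f(y,y')\leqslant\phi(\rho(y,y'))$ for all $y,y'$, with $\phi(r)\triangleq\|f\|_{\infty}\mathbf{1}_{\{r\leqslant R\}}+Cr^{-\alpha}\mathbf{1}_{\{r>R\}}$ bounded and, crucially, $\int_{1}^{\infty}\phi(r)\,dr<\infty$ because $\alpha>1$. By \eqref{eq:HypFK-plan} it then suffices to show $\mathbb{E}\big[\exp(\beta^{2}\int_{0}^{\infty}\phi(r_{s})\,ds)\big]<\infty$ for all small $\beta$. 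Put $\sigma\triangleq\sup\{s\geqslant 0:r_{s}\leqslant(d-1)s\}$, which is a.s.\ finite by \eqref{eq:radLB-plan}. For $s>\sigma\vee\frac{R}{d-1}$ one has $r_{s}>(d-1)s>R$, hence $\phi(r_{s})=Cr_{s}^{-\alpha}\leqslant C((d-1)s)^{-\alpha}$, which is integrable near $\infty$; splitting the integral at $\sigma\vee\frac{R}{d-1}$ and using the boundedness of $\phi$ thus gives a \emph{deterministic} bound $\int_{0}^{\infty}\phi(r_{s})\,ds\leqslant c_{1}\sigma+c_{2}$ with $c_{1},c_{2}$ depending only on $f,\alpha,d$. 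It therefore remains to show that $\sigma$ has an exponential moment, and here \eqref{eq:radLB-plan} enters: $\mathbb{P}(\sigma>T)\leqslant\sum_{n\geqslant\lfloor T\rfloor}\mathbb{P}\big(\inf_{n\leqslant s\leqslant n+1}W_{s}\leqslant -c_{3}n\big)$, and the reflection principle bounds the $n$-th summand by $2e^{-c_{4}n}$, so $\mathbb{P}(\sigma>T)\leqslant c_{5}e^{-c_{4}T}$ and $\mathbb{E}[e^{\lambda\sigma}]<\infty$ for $\lambda<c_{4}$. Choosing $\beta_{0}\triangleq\sqrt{c_{4}/c_{1}}$ finishes part (a), and the bound obtained is uniform in $t$ and $x$ as required.

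For part (b) the plan is to lose almost nothing through Jensen's inequality: applied to \eqref{eq:HypFK-plan} it gives $\mathbf{E}[u(t,x)^{2}]\geqslant\exp\big(\beta^{2}\int_{0}^{t}\mathbb{E}[f(B_{s},\tilde{B}_{s})]\,ds\big)$, so it is enough to show $\int_{0}^{t}\mathbb{E}[f(B_{s},\tilde{B}_{s})]\,ds\to\infty$ (the integrand being independent of $x$ by homogeneity). From \eqref{eq:LowDecayCov} and $|f|\leqslant\|f\|_{\infty}$,
\[
\mathbb{E}\big[f(B_{s},\tilde{B}_{s})\big]\ \geqslant\ C\,\mathbb{E}\big[r_{s}^{-\alpha}\mathbf{1}_{\{R\leqslant r_{s}\leqslant Ds\}}\big]-\|f\|_{\infty}\,\mathbb{P}(r_{s}<R).
\]
One picks $D$ so large that $\mathbb{P}(r_{s}\leqslant Ds)\geqslant\tfrac34$ for all large $s$ --- this follows from the classical fact that $\rho(x,B_{s})/s$ converges a.s.\ to a positive constant, together with $r_{s}\leqslant\rho(x,B_{s})+\rho(x,\tilde{B}_{s})$ --- while \eqref{eq:radLB-plan} and a Gaussian tail estimate give $\mathbb{P}(r_{s}<R)\leqslant e^{-c_{6}s}$ for all large $s$; hence $\mathbb{P}(R\leqslant r_{s}\leqslant Ds)\geqslant\tfrac12$, on which event $r_{s}^{-\alpha}\geqslant(Ds)^{-\alpha}$, so that for all $s$ large
\[
\mathbb{E}\big[f(B_{s},\tilde{B}_{s})\big]\ \geqslant\ \tfrac{C}{2}(Ds)^{-\alpha}-\|f\|_{\infty}e^{-c_{6}s}\ \geqslant\ c_{7}\,s^{-\alpha}.
\]
Since $\alpha<1$, $\int^{t}c_{7}s^{-\alpha}\,ds\asymp t^{1-\alpha}$, so $\int_{0}^{t}\mathbb{E}[f(B_{s},\tilde{B}_{s})]\,ds\geqslant\frac{c_{7}}{1-\alpha}t^{1-\alpha}-c_{8}$ and $\log\mathbf{E}[u(t,x)^{2}]\geqslant\frac{\beta^{2}c_{7}}{1-\alpha}t^{1-\alpha}-c_{9}\to\infty$ for every $\beta>0$; this lower bound, of order $t^{1-\alpha}$, simultaneously exhibits the sub-exponential blow-up rate announced in the abstract.

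The one genuinely non-routine ingredient --- and hence the main obstacle --- is the radial SDE for $r_{t}$ and the comparison \eqref{eq:radLB-plan} extracted from it. This requires justifying It\^{o}'s formula for the distance function $\rho$ (which is smooth only off the diagonal, the only relevant singularity here since $\mathbb{H}^{d}$ has no cut locus), and more delicately verifying that $r_{t}$ never returns to $0$ for $t>0$ so that the SDE and the comparison theorem may be applied on $(0,\infty)$ despite the singular --- but favourably signed --- drift $2(d-1)\coth(r_{t})$; a clean route is the scale-function/speed-measure classification, which makes $0$ an entrance boundary for $d\geqslant 2$. Granting \eqref{eq:radLB-plan}, the remainder of part (a) (the deterministic integral bound in $\sigma$ and the reflection-principle tail of $\sigma$) and of part (b) (the Jensen step and the elementary estimate of $\mathbb{E}[f(B_{s},\tilde{B}_{s})]$) are routine, the only point demanding care being the explicit threshold $\beta_{0}$.
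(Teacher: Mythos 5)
Your proposal is correct, and for both parts it implements a genuinely different technical route from the paper, although the global outline (Feynman--Kac, then control the distance $\rho(B_s,\tilde B_s)$ between the two Brownian particles) is the same. The unifying device in your proof is the radial SDE $dr_t=2\,dW_t+2(d-1)\coth(r_t)\,dt$ for $r_t=\rho(B_t,\tilde B_t)$ together with the pathwise bound $r_t\geqslant 2W_t+2(d-1)t$. In the paper this geometric input is extracted instead from the Davies--Mandouvalos heat kernel estimate (Lemma~\ref{lem:HKEst}), the reversed triangle inequality (Lemma~\ref{lem:RevTri}), and, for the lower bound, an explicit hyperboloid-model localisation (Lemma~\ref{lem:HypLoc}). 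For part (a), the paper expands the exponential moment into the Wiener chaos series and reduces matters to Lemma~\ref{lem:KeyLem2MomUp}, a bound on $\int_0^\infty\mathbb{E}_{x,y}[f(B_t,\tilde B_t)]\,dt$ that must be uniform in \emph{arbitrary} starting pairs $(x,y)$, because the renewal structure of the chaos series moves the base point around; your argument bounds the exponential moment directly via the deterministic inequality $\int_0^\infty\phi(r_s)\,ds\leqslant c_1\sigma+c_2$ and the exponential tail of the random time $\sigma$, which avoids the chaos expansion and with it the need for uniformity over base points (only homogeneity in the single starting point $x=y$ is used). For part (b), both proofs rest on Jensen, but the lower bound on $\mathbb{E}_{x,x}[f(B_s,\tilde B_s)]\gtrsim s^{-\alpha}$ is obtained in the paper by integrating the heat kernel over a sector selected via Lemma~\ref{lem:HypLoc}, whereas you obtain it from the escape rate $\rho(x,B_s)/s\to d-1$ together with the tail $\mathbb{P}(r_s<R)\leqslant e^{-cs}$ coming from the same radial comparison; this again trades heat kernel analysis for one-dimensional stochastic calculus. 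Two small points worth recording if you turn this into a full proof: you should establish the radial SDE only on $(\varepsilon,t)$ and let $\varepsilon\downarrow 0$ (using $r_\varepsilon\geqslant 0$ and $\coth\geqslant 1$, no comparison theorem for singular SDEs is actually needed), and you should note that $r_t>0$ for $t>0$ because near the diagonal $r_t$ is a Bessel-type process of index $\geqslant 2$. Your argument also yields, as the paper's does, the quantitative lower bound $\log\mathbf{E}[u(t,x)^2]\gtrsim\beta^2 t^{1-\alpha}$ of Theorem~\ref{thm:CrudeLower}, which is stronger than what part~(b) of the statement asserts.
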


Similarly (to the Euclidean setting), when the inverse temperature $\beta$ is large enough, the second moment blows up exponentially independent of the choice of $\alpha.$

\begin{thm}\label{thm2-intro}
    For any $\alpha>0$, there exists $\beta_1>0$ depending on $\alpha$, such that for all $\beta>\beta_1$ one has
    \begin{equation}\label{eq:LargeBLower}
\underset{t\rightarrow\infty}{\underline{\lim}}\frac{1}{t}\log\mathbf{E}[u(t,x)^{2}]>0.
\end{equation}
    
\end{thm}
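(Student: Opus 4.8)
The plan is to run a confinement (localization) argument on the Feynman--Kac representation of the second moment. The hyperbolic analogue of \eqref{FK-moment-intro} reads
\[
\mathbf{E}\big[u(t,x)^{2}\big]=\mathbb{E}^{(x,x)}\Big[\exp\Big(\beta^{2}\int_{0}^{t}f(B_{s},\tilde{B}_{s})\,ds\Big)\Big],
\]
where $B,\tilde{B}$ are two independent Brownian motions on $\mathbb{H}^{d}$ both started at $x$; this comes from the Wiener chaos expansion of \eqref{HypPAM-Mild} together with the fact that $v(t,x,y)\triangleq\mathbf{E}[u(t,x)u(t,y)]$ solves $\partial_{t}v=(\Delta_{x}+\Delta_{y})v+\beta^{2}f(x,y)v$ with $v(0,\cdot,\cdot)\equiv1$ (if the right-hand side is already $+\infty$ there is nothing to prove, so we may assume finiteness, in which case the identity holds). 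Since only a lower bound is needed, I would discard most of the expectation and retain the contribution of a single favorable event on which both Brownian motions stay close together.

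Concretely, fix the base point $x$ and, using that the covariance $f$ is bounded below by a positive constant near the diagonal, pick $r>0$ for which
\[
\delta\triangleq\inf\big\{f(y,z):\rho(y,z)\leqslant 2r\big\}>0 ,
\]
with $\delta$ independent of $\beta$. Let $\mathcal{B}=\mathcal{B}(x,r)$ and consider the event $A_{t}=\{B_{s}\in\mathcal{B}\text{ and }\tilde{B}_{s}\in\mathcal{B}\ \text{for all }s\leqslant t\}$. On $A_{t}$ one has $\rho(B_{s},\tilde{B}_{s})\leqslant 2r$ for every $s\leqslant t$, hence $\int_{0}^{t}f(B_{s},\tilde{B}_{s})\,ds\geqslant\delta t$, and therefore, by independence of $B$ and $\tilde{B}$,
\[
\mathbf{E}\big[u(t,x)^{2}\big]\geqslant e^{\beta^{2}\delta t}\,\mathbb{P}(A_{t})=e^{\beta^{2}\delta t}\,\mathbb{P}_{x}\big(\tau_{\mathcal{B}}>t\big)^{2},
\]
where $\tau_{\mathcal{B}}$ is the exit time of a single hyperbolic Brownian motion from $\mathcal{B}$.

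The last ingredient is the classical exponential lower bound for the survival probability, $\mathbb{P}_{x}(\tau_{\mathcal{B}}>t)\geqslant c_{r}\,e^{-\lambda_{r}t}$ for all $t\geqslant0$, where $\lambda_{r}=\lambda_{1}(\mathcal{B})$ is the principal Dirichlet eigenvalue of $-\Delta$ on $\mathcal{B}$ and $c_{r}>0$: writing $\mathbb{P}_{x}(\tau_{\mathcal{B}}>t)=\int_{\mathcal{B}}p^{\mathcal{B}}_{t}(x,y)\,dy$ and expanding the Dirichlet heat kernel $p^{\mathcal{B}}_{t}$ in eigenfunctions, the leading term is $e^{-\lambda_{r}t}\varphi_{1}(x)\int_{\mathcal{B}}\varphi_{1}\,dy$, which is bounded below for large $t$ because the ground state $\varphi_{1}$ is strictly positive inside $\mathcal{B}$ (and $\mathbb{P}_{x}(\tau_{\mathcal{B}}>t)$ is bounded below on any compact time interval). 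Substituting, $\mathbf{E}[u(t,x)^{2}]\geqslant c_{r}^{2}\,e^{(\beta^{2}\delta-2\lambda_{r})t}$, so
\[
\liminf_{t\to\infty}\frac{1}{t}\log\mathbf{E}\big[u(t,x)^{2}\big]\geqslant\beta^{2}\delta-2\lambda_{r},
\]
which is strictly positive as soon as $\beta>\beta_{1}\triangleq\sqrt{2\lambda_{r}/\delta}$. Since $r$, and hence $\delta$ and $\lambda_{r}$, are fixed once and for all before $\beta$ is sent to infinity, the choice of $\beta_{1}$ involves no circularity.

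I expect the only genuinely technical step to be the sharp survival estimate $\mathbb{P}_{x}(\tau_{\mathcal{B}}>t)\gtrsim e^{-\lambda_{1}(\mathcal{B})t}$ on a geodesic ball in $\mathbb{H}^{d}$: one needs that the Dirichlet Laplacian on the relatively compact smooth domain $\mathcal{B}$ has discrete spectrum with a strictly positive principal eigenfunction, and that the eigenfunction expansion of $p^{\mathcal{B}}_{t}$ converges well enough to isolate the leading term. This is standard elliptic/spectral theory and transfers verbatim from the Euclidean case. The remaining inputs --- the Feynman--Kac moment identity in the hyperbolic setting and the lower bound on $f$ near the diagonal --- should already be available from the second-moment PDE above and the standing hypotheses on the model, and everything else in the argument is soft.
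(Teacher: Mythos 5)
Your proof is correct and takes essentially the same route as the paper's proof of Theorem~\ref{thm2-intro}: lower bound the Feynman--Kac representation \eqref{eq:2MomRep} by confining both Brownian motions to a fixed geodesic ball about $x$, yielding the gain $e^{\beta^{2}\delta t}$ against the survival cost $e^{-2\lambda_{r}t}$, and then take $\beta$ large. The only small refinement worth noting is that the paper defines its lower bound $m_{f}(r)$ as an infimum over pairs $(y,z)$ both near the fixed starting point $x$ (so positivity follows from continuity of $f$ and $f(x,x)>0$ alone), which is all your confinement event $A_{t}$ actually requires, so you need not posit a global lower bound for $f$ near the diagonal as you phrased it.
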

It is clear from the above two theorems that the second moment of the PAM presents a similar large-time behavior to the Euclidean case as described in {\bf E-(i)} and {\bf E-(ii)}, and, as expected, the critical value for $\alpha$ is $1$ because a hyperbolic Brownian motion travels at a speed of $O(t)$ as oppose to $O(\sqrt{t})$ of a Euclidean Brownian motion. However, our analysis discovers a novel phase transition that is absent in the Euclidean situation for $\alpha<1.$

\begin{thm}\label{thm3-intro}
    Let $\alpha<1$ and assume that  as   $\rho(x,y)$ tends to infinity,
    \begin{align}f(x,y)\asymp \frac{1}{\rho(x,y)^{\alpha}}.\label{decay-covari1-intro}\end{align}
    Then there exists a $\bar{\beta}_0$ such that for all $\beta<\bar{\beta}_0$, one has
    $$\log\mathbf{E}[u(t,x)^{2}]\asymp {t^{1-\alpha}},\quad\mathrm{as}\ t\uparrow\infty.$$
    
\end{thm}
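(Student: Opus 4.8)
The plan is to run the whole argument through the Feynman--Kac representation of the second moment --- the hyperbolic analogue of \eqref{FK-moment-intro} ---
\[
\mathbf{E}\big[u(t,x)^{2}\big]=\mathbb{E}\Big[\exp\Big(\beta^{2}\int_{0}^{t}f(B_{s},\tilde{B}_{s})\,ds\Big)\Big],
\]
where $B$ and $\tilde{B}$ are two independent hyperbolic Brownian motions started at $x$ (I take this, and the basic facts about hyperbolic Brownian motion, as already established in the body of the paper). Everything then reduces to controlling the single one-dimensional process $\rho_{s}\triangleq\rho(B_{s},\tilde{B}_{s})$. By It\^{o}'s formula applied to the distance function on $\mathbb{H}^{d}\times\mathbb{H}^{d}$ (valid since the pair process leaves the diagonal immediately and the drift is locally integrable at $s=0$), using $\Delta_{x}\rho(x,y)=\Delta_{y}\rho(x,y)=(d-1)\coth\rho(x,y)$, one gets
\[
d\rho_{s}=dM_{s}+2(d-1)\coth(\rho_{s})\,ds,\qquad\langle M\rangle_{s}=4s,
\]
so that $\rho_{s}\geq2(d-1)s+M_{s}$ with $M$ a time-changed standard Brownian motion. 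In particular $\rho_{s}/s\to2(d-1)>0$ almost surely, and the event that $\rho_{s}$ falls below a fixed linear profile has exponentially small probability.

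I would do the lower bound first, as it should hold for every $\beta>0$. Jensen's inequality gives $\mathbf{E}\big[u(t,x)^{2}\big]\geq\exp\big(\beta^{2}\int_{0}^{t}\mathbb{E}[f(B_{s},\tilde{B}_{s})]\,ds\big)$. Using $f(x,y)\geq c\,\rho(x,y)^{-\alpha}$ for $\rho(x,y)\geq R$ together with $|f|\leq\|f\|_{\infty}$, one writes
\[
\mathbb{E}\big[f(B_{s},\tilde{B}_{s})\big]\geq c\,\mathbb{E}\big[\rho_{s}^{-\alpha}\mathbf{1}_{\{R\leq\rho_{s}\leq3(d-1)s\}}\big]-\|f\|_{\infty}\,\mathbb{P}(\rho_{s}<R).
\]
For $s$ large the indicator event has probability at least $\tfrac12$ (by $\rho_{s}/s\to2(d-1)$ and dominated convergence), while $\mathbb{P}(\rho_{s}<R)$ is exponentially small in $s$ by the comparison $\rho_{s}\geq2(d-1)s+M_{s}$; hence $\mathbb{E}[f(B_{s},\tilde{B}_{s})]\geq c'\,s^{-\alpha}$ for $s$ large, and since $\alpha<1$ the time integral is $\geq c''\,t^{1-\alpha}$, giving $\log\mathbf{E}[u(t,x)^{2}]\geq c''\beta^{2}\,t^{1-\alpha}$ for all $\beta>0$.

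For the upper bound --- where the restriction $\beta<\bar{\beta}_{0}$ enters --- the idea is to split the Feynman--Kac exponent into a deterministic $t^{1-\alpha}$ term and a fluctuation term with a finite exponential moment. Since $f$ is bounded and $|f(x,y)|\leq C\rho(x,y)^{-\alpha}$ for $\rho>R$, one has $f(x,y)\leq C_{1}(1+\rho(x,y))^{-\alpha}$; fixing any $\epsilon\in(0,2(d-1))$ and using the elementary bound $(1+\rho_{s})^{-\alpha}\leq(1+\epsilon s)^{-\alpha}+\mathbf{1}_{\{\rho_{s}<\epsilon s\}}$ yields
\[
\int_{0}^{t}f(B_{s},\tilde{B}_{s})\,ds\leq C(\epsilon)\,t^{1-\alpha}+C_{1}L_{\infty},\qquad L_{\infty}\triangleq\int_{0}^{\infty}\mathbf{1}_{\{\rho_{s}<\epsilon s\}}\,ds,
\]
where $L_{\infty}<\infty$ almost surely because $\rho_{s}/s\to2(d-1)>\epsilon$. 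The crucial estimate is that $L_{\infty}$ has a finite exponential moment up to a positive threshold $c_{0}$: from $\rho_{s}\geq2(d-1)s+M_{s}$ one gets $\{\rho_{s}<\epsilon s\}\subseteq\{M_{s}<-\delta s\}$ with $\delta\triangleq2(d-1)-\epsilon>0$, and a dyadic-block reflection estimate for the standard Brownian motion underlying $M$ gives $\mathbb{P}(L_{\infty}>m)\leq Ce^{-c_{0}m}$, whence $\mathbb{E}[e^{\lambda L_{\infty}}]<\infty$ for $\lambda<c_{0}$. Setting $\bar{\beta}_{0}\triangleq\sqrt{c_{0}/C_{1}}$, for $\beta<\bar{\beta}_{0}$ we obtain
\[
\mathbf{E}\big[u(t,x)^{2}\big]\leq e^{\beta^{2}C(\epsilon)t^{1-\alpha}}\,\mathbb{E}\big[e^{\beta^{2}C_{1}L_{\infty}}\big]\leq C_{\beta}\,e^{\beta^{2}C(\epsilon)t^{1-\alpha}},
\]
and taking logarithms completes the argument.

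The main obstacle is precisely this exponential-moment bound for $L_{\infty}$: quantifying how costly it is for the pair $(B_{s},\tilde{B}_{s})$ to lag behind the linear escape schedule $\rho_{s}\asymp s$ for a long cumulative time. It is also what pins down the threshold $\bar{\beta}_{0}$, expressed through the covariance decay constant $C_{1}$ and the large-deviation rate $c_{0}$ of the escape --- and the picture is consistent with Theorem~\ref{thm2-intro}, since once $\beta$ is large the lagging strategy becomes profitable and the growth genuinely turns exponential, overriding the $t^{1-\alpha}$ rate. Everything else --- Jensen's inequality, the inequality $(1+\rho_{s})^{-\alpha}\leq(1+\epsilon s)^{-\alpha}+\mathbf{1}_{\{\rho_{s}<\epsilon s\}}$, and the It\^{o} comparison for $\rho_{s}$ --- is soft or standard.
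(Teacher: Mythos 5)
Your proposal is correct and takes a genuinely different route from the paper on both halves. For the lower bound, the paper avoids It\^o's formula entirely: it works in the hyperboloid model, localizes the two endpoints to fixed angular sectors $A,B\subseteq\mathbb{S}^{d-1}$ via a small geometric lemma (so that $\rho(y,z)\geqslant\max\{\rho(y,o),\rho(z,o)\}$), and then integrates the covariance against the explicit heat kernel asymptotics of Davies--Mandouvalos. You instead study the single one-dimensional process $\rho_{s}=\rho(B_{s},\tilde{B}_{s})$ directly through the comparison $\rho_{s}\geqslant 2(d-1)s+M_{s}$; this is cleaner once one accepts the Bessel-type singularity of $\coth\rho_{s}$ at the diagonal (your flag is appropriate --- near $0$ the radial process has effective Bessel dimension $d\geqslant2$, so the drift is locally integrable and the comparison holds). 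For the upper bound the divergence is more substantial: the paper expands the exponential as a Wiener-chaos-style series and does combinatorial bookkeeping over the subset ${\cal I}\subseteq\{1,\dots,n\}$ of time indices for which the good event $A_{s_{k}}$ holds, summing the resulting binomial contributions; you bound the Feynman--Kac exponent \emph{pathwise} by $C(\epsilon)t^{1-\alpha}+C_{1}L_{\infty}$ and reduce everything to a finite exponential moment for the sojourn time $L_{\infty}=\int_{0}^{\infty}\mathbf{1}_{\{\rho_{s}<\epsilon s\}}ds$. This is a more conceptual argument and makes the origin of $\bar{\beta}_{0}$ visibly a large-deviation threshold (the exponential-moment cutoff of $L_{\infty}$, which in turn comes from the rate $c_{0}\asymp\delta^{2}$ in the persistence tail $\mathbb{P}(\exists s\geqslant m:M_{s}<-\delta s)\leqslant Ce^{-c_{0}m}$ --- a classical last-passage estimate for drifted Brownian motion, perhaps easier to cite than to rederive by dyadic blocks). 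Both routes ultimately produce the same structural form $\bar{\beta}_{0}\asymp\sqrt{(\text{escape rate})/(\text{covariance scale})}$; the paper's series expansion is more self-contained at the expense of the bookkeeping, while yours offloads the work to a single well-understood sojourn-time tail bound.
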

Theorems \ref{thm2-intro} and \ref{thm3-intro} together show that, unlike the Euclidean case described in {\bf E-(ii)}, when $\alpha<1$ the hyperbolic PAM undergoes a subtle phase transition in terms of the blow-up rate of its second moment: the second moment blows up sub-exponentially when $\beta$ is small, whereas it blows up exponentially when $\beta$ is large. As one will see below, this new phenomenon arises because when the noise intensity is low (small $\beta$), the PAM has to pick up the noise from far away to grow. Since hyperbolic Brownian motions travel to infinity at a much faster speed than a Euclidean Brownian motion, this together with the decay of the covariance function \eqref{decay-covari1-intro} gives a slower blow-up rate.  On the other hand, when the noise intensity is high (large $\beta$), the local environment is already strong enough to cause an exponential blow-up for the moment.  

The significance of the large-time behavior of the second moment can be tracked back to the so-called {\it $L^2$-region} for directed polymers on $\mathbb{Z}^d$, in which the second moment of the (normalized) partition function can be computed rather explicitly (see, e.g., \cite{Co16}). In the $L^2$-region, the partition function converges to a positive random variable almost surely and the polymer exhibits diffusive behavior under the polymer measure.  The parabolic Anderson model is closely connected to the theory of continuum directed polymers, as both share the same probability law through the polymer partition function. Consequently, analysis of the second moment of the PAM yields important information about the statistical properties of continuum polymers. The new phase transition described in the last paragraph for hyperbolic PAM raises the natural question of whether this transition corresponds to new qualitative behaviors of the polymer that are absent in Euclidean geometries. This is a reasonable expectation, given the distinct properties of hyperbolic Brownian motion compared to its Euclidean counterpart. These differences suggest the possibility of novel localization phenomena or fluctuation regimes for the associated polymer measure. Exploring these potential behaviors in the hyperbolic setting will be a focus of our future work.

To conclude the introduction of our main results, we would like to make a comment on the covariance function on hyperbolic spaces with polynomial decay when $\rho(x,y)\uparrow\infty$,
\begin{align}\label{covar-poly-intro}f(x,y)\asymp\frac{1}{\rho(x,y)^\alpha}\end{align} for $\alpha>0$.  Due to the geometry of hyperbolic spaces, it is not a priori clear whether such a function exists. Indeed, most known positive definite functions on hyperbolic spaces decay exponentially fast as $\rho(x,y)\uparrow\infty$ (see, e.g., \cite{BCO25, BHV09}). In Section \ref{sec: covar-poly} below, we give an explicit construction of positive definite functions satisfying \eqref{covar-poly-intro}. As one will see, this construction is non-trivial and may be of independent interest.

\begin{rem}
    In a recent work \cite{BCO25}, moment estimates have been studied for the parabolic Anderson model on Cartan-Hadamard manifolds. The covariance function of the noise considered in this work decays exponentially fast as $\rho(x,y)\uparrow\infty$ and falls under the setting of Theorem \ref{thm1-intro}-(a). Therefore,  the transitions discussed in the present paper do not appear in \cite{BCO25}.
\end{rem}

%Since the PAM has the same probability law as the partition function of continuum polymers, information on its second moment sheds a light to the understanding of continuum polymers. Given the new phase transition of the blow-up rate for the PAM in hyperbolic spaces, it is natural to ask whether it is a result of some novel behavior of the polymer that is not observed in Euclidean spaces. Indeed, a hyperbolic Brownian motion behaves very differently than its Euclidean counterpart; therefore, it is reasonable to expect seeing some new behavior of polymers not previously observed in Euclidean spaces. Questions in this direction will be explored in future work.

\section{Upper estimates}

In this section, we prove Part (a) of Theorem \ref{thm1-intro}. We assume that the covariance function $f(x,y)$ satisfies
\[
|f(x,y)|\leqslant\frac{C}{\rho(x,y)^{\alpha}}
\]
for all $x,y$ with $\rho(x,y)>R$, where $\alpha>1$ and $C,R$ are
given positive constants. 
\begin{comment}

\subsection{Uniform boundedness of second moment for small $\beta$}

In this subsection, we prove the following result. We use $\rho(x,y)$
to denote the hyperbolic distance between $x,y\in M$.
\begin{thm}
\label{thm:UpEst2Mom}Suppose that the covariance function $f$ satisfies
the following estimate:
\begin{equation}
\big|f(x,y)\big|\leqslant\frac{C}{\rho(x,y)^{\alpha}}\ \ \ \forall x,y:\rho(x,y)>R,\label{eq:UpDecayAssump}
\end{equation}
with some given constants $C,R>0$ and $\alpha>1$. Then there exists
a constant $\beta_{0}>0$, such that 
\[
\sup_{\substack{t\geqslant0,x\in M}
}\mathbb{E}\big[u(t,x)^{2}\big]<\infty
\]
for each $\beta\in(0,\beta_{0})$ (note that the solution $u$ to
the equation (\ref{eq:HypPAM}) depends on the parameter $\beta$).
\end{thm}
%
\end{comment}
\subsection{Some basic representations}

The analysis of $u(t,x)$ is largely based on its Feynman-Kac representation
stated below. Its proof is standard and is thus omitted.
\begin{prop}
\label{prop:FKRep}The solution $u(t,x)$ to the PAM (\ref{eq:HypPAM}) is given by 
\begin{equation}
u(t,x)=\mathbb{E}_{x}\big[\exp\big(\beta\int_{0}^{t}W(ds,B_{s})-\frac{1}{2}\beta^{2}\int_{0}^{t}f(B_{s},B_{s})ds\big)\big],\label{eq:FKRep}
\end{equation}
where $B$ is a hyperbolic Brownian motion starting at $x$ that is independent
of $W$ and $\mathbb{E}_{x}$ means taking expectation with respect
to $B$. 
\end{prop}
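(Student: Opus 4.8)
The cleanest route is to identify the two sides of \eqref{eq:FKRep} through their Wiener chaos expansions with respect to $W$ (an alternative is to verify that the right-hand side of \eqref{eq:FKRep} solves the mild equation \eqref{HypPAM-Mild} and to invoke uniqueness, but the chaos computation exhibits the mechanism most clearly). Write $I_{n}$ for the $n$-fold It\^{o}--Walsh integral against $W$ and let $\mathcal{H}$ be the Hilbert space of the noise, with $\|g\|_{\mathcal{H}}^{2}=\int_{0}^{\infty}\!\int_{M}\!\int_{M}g(s,y)g(s,y')f(y,y')\,dy\,dy'\,ds$. Iterating the mild equation \eqref{HypPAM-Mild} (a Picard scheme) yields $u(t,x)=\sum_{n\geqslant0}\beta^{n}I_{n}\big(\widetilde{f}_{n}(\,\cdot\,;t,x)\big)$, where $\widetilde{f}_{n}$ is the symmetrization in the pairs $(s_{1},y_{1}),\dots,(s_{n},y_{n})$ of
\[
p_{t-s_{n}}(x,y_{n})\,p_{s_{n}-s_{n-1}}(y_{n},y_{n-1})\cdots p_{s_{2}-s_{1}}(y_{2},y_{1})\,\mathbf{1}_{\{0<s_{1}<\cdots<s_{n}<t\}}.
\]
Since the hyperbolic heat semigroup is conservative, $\int_{M}p_{\tau}(x,y)\,dy=1$, so the $M^{n}$-integral of the above product over the $y$-variables equals $1$; bounding $f\leqslant\|f\|_{\infty}$ and using that the $n!$ permuted copies of a simplex-supported kernel are mutually orthogonal in $\mathcal{H}^{\otimes n}$ gives $n!\,\|\widetilde{f}_{n}\|_{\mathcal{H}^{\otimes n}}^{2}\leqslant\|f\|_{\infty}^{n}t^{n}/n!$, hence $\mathbf{E}[u(t,x)^{2}]=\sum_{n}\beta^{2n}n!\,\|\widetilde{f}_{n}\|_{\mathcal{H}^{\otimes n}}^{2}\leqslant e^{\beta^{2}\|f\|_{\infty}t}<\infty$. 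This uses only boundedness of $f$, not smallness of $\beta$; the constant $\beta_{0}$ of Theorem~\ref{thm1-intro}(a) does not enter here, it is needed later for uniformity in $t$.

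On the Feynman--Kac side, freeze the Brownian path $B$. Since $B$ is independent of $W$, conditionally on $B$ the variable $\int_{0}^{t}W(ds,B_{s})=\int_{0}^{t}\!\int_{M}\delta_{B_{s}}(dy)\,W(ds,dy)$ is a centered Gaussian, being the It\^{o}--Walsh integral of the (conditionally deterministic) measure-valued kernel $g^{B}(s,y)=\mathbf{1}_{[0,t]}(s)\,\delta_{B_{s}}(y)$, whose $\mathcal{H}$-norm squared is exactly $\int_{0}^{t}f(B_{s},B_{s})\,ds$. Hence the integrand in \eqref{eq:FKRep} is, conditionally on $B$, the Wick exponential of $\beta I_{1}(g^{B})$, and admits the expansion $\sum_{n\geqslant0}\tfrac{\beta^{n}}{n!}I_{n}\big((g^{B})^{\otimes n}\big)$, convergent in $L^{2}(\Omega)$. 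Taking $\mathbb{E}_{x}$ and interchanging it with each $I_{n}$ by a stochastic Fubini theorem (and, by the bound above, with the sum) gives
\[
\mathbb{E}_{x}\Big[\exp\Big(\beta\!\int_{0}^{t}\!W(ds,B_{s})-\tfrac{1}{2}\beta^{2}\!\int_{0}^{t}\!f(B_{s},B_{s})\,ds\Big)\Big]=\sum_{n\geqslant0}\frac{\beta^{n}}{n!}\,I_{n}\big(\mathbb{E}_{x}[(g^{B})^{\otimes n}]\big),
\]
and by the Markov property of $B$ the kernel $\mathbb{E}_{x}[(g^{B})^{\otimes n}]$ is absolutely continuous on $[0,t]^{n}$ and equals, on $\{s_{1}<\cdots<s_{n}\}$, the product $p_{s_{1}}(x,y_{1})\,p_{s_{2}-s_{1}}(y_{1},y_{2})\cdots p_{s_{n}-s_{n-1}}(y_{n-1},y_{n})$.

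It remains to match the two families of symmetric kernels. Up to the symmetry $p_{\tau}(y,y')=p_{\tau}(y',y)$ of the hyperbolic heat kernel and the Chapman--Kolmogorov identity, the kernel just obtained differs from $\widetilde{f}_{n}$ only through the time substitution $(s_{1},\dots,s_{n})\mapsto(t-s_{n},\dots,t-s_{1})$. Since $W$ is white in time, its restriction to $[0,t]$ is invariant in law under $s\mapsto t-s$; replacing $W$ in \eqref{eq:FKRep} by the time-reversed noise $\widehat{W}^{t}$ defined by $\widehat{W}^{t}([0,s]\times A)=W([t-s,t]\times A)$ turns this substitution into an equality of kernels, and $\widehat{W}^{t}\overset{d}{=}W|_{[0,t]}$. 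Therefore the two chaos expansions coincide, which gives \eqref{eq:FKRep} (as an identity in law at each fixed $(t,x)$, which is all the second-moment analysis of the paper uses; and as an almost sure identity once the noise in \eqref{eq:FKRep} is read as $\widehat{W}^{t}$). The same computation, applied to $u(t,x)^{2}$, recovers \eqref{FK-moment-intro} for a hyperbolic Brownian pair.

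The main obstacle is not a single estimate but the careful justification of the two interchanges of $\mathbb{E}_{x}$ with the multiple It\^{o}--Walsh integrals and with the infinite series: one must set up a stochastic Fubini theorem for It\^{o}--Walsh integrals of a random kernel independent of the integrator, verify $L^{2}(\Omega)$-convergence of the conditional Wick exponential uniformly enough to pass to the $B$-average, and keep track of the time reversal so that the symmetric kernels literally agree. All of this is routine within the chaos framework, which is presumably why the authors declare the proof standard and omit it.
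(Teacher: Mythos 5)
Your proposal is correct and follows the standard route; the paper simply declares the proof ``standard'' and omits it, so there is no in-paper argument to compare line by line. The chaos-expansion approach you take (iterate the mild equation~\eqref{HypPAM-Mild}, expand the conditional Wick exponential on the Feynman--Kac side, and match kernels) is exactly what ``standard'' means here, and it is also what the cited reference \cite{HNS11} does in the Euclidean/fractional setting.

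One thing worth highlighting is that you are \emph{more} precise than the statement of Proposition~\ref{prop:FKRep} itself on a genuine subtlety: the $n$-th chaos kernel coming from Picard iteration of~\eqref{HypPAM-Mild}, namely $p_{t-s_{n}}(x,y_{n})p_{s_{n}-s_{n-1}}(y_{n},y_{n-1})\cdots p_{s_{2}-s_{1}}(y_{2},y_{1})$ on the simplex, agrees with the kernel $\mathbb{E}_{x}[(g^{B})^{\otimes n}]=p_{s_{1}}(x,y_{1})\cdots p_{s_{n}-s_{n-1}}(y_{n-1},y_{n})$ only after the substitution $(s_{1},\dots,s_{n})\mapsto(t-s_{n},\dots,t-s_{1})$. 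So as written---with the same It\^{o}--Walsh noise $W$ on both sides and $B_{s}$ (not $B_{t-s}$) in the stochastic integral---\eqref{eq:FKRep} is an identity in law, not almost surely; the almost-sure version requires the time-reversed noise, precisely as in \cite{HNS11}. You correctly note that equality in law is all that is used in the proof of Lemma~\ref{lem:2MomRep} and everywhere downstream. Two cosmetic remarks: the appeal to the Chapman--Kolmogorov identity in the kernel-matching step is not actually needed (the time substitution already maps one product of heat kernels to the other term by term, using only $p_{\tau}(y,y')=p_{\tau}(y',y)$), and the normalization conventions for $\widetilde f_{n}$ versus $I_{n}$ should be stated once and for all (whether the $1/n!$ sits in the symmetrization or in the definition of $I_{n}$) so that the bound $n!\,\|\widetilde f_{n}\|_{\mathcal H^{\otimes n}}^{2}\leqslant\|f\|_{\infty}^{n}t^{n}/n!$ is unambiguous; with the usual convention the computation is as you wrote it.
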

An important consequence of Proposition \ref{prop:FKRep} is the following
representation of the second moment of $u(t,x)$.
\begin{lem}
\label{lem:2MomRep}One has 
\begin{equation}
\mathbf{E}\big[u(t,x)^{2}\big]=\mathbb{E}_{x,x}\big[\exp\big(\beta^{2}\int_{0}^{t}f(B_{s},\tilde{B}_{s})ds\big)\big].\label{eq:2MomRep}
\end{equation}
The expectation $\mathbf{E}$ on the left hand side is taken with respect
to the Gaussian field $W$. On the right hand side, $B,\tilde{B}$ are two independent
copies of hyperbolic Brownian motions starting at $x$ (jointly independent of $W$)
and $\mathbb{E}_{x,x}$ means taking expectation with respect to them.
\end{lem}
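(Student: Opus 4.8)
The plan is to obtain \eqref{eq:2MomRep} directly from the Feynman--Kac representation \eqref{eq:FKRep} by squaring it and then integrating out the Gaussian field $W$. First I would introduce a second hyperbolic Brownian motion $\tilde B$ started at $x$, independent of both $B$ and $W$, and use the independence of $B$ and $\tilde B$ to write the square of \eqref{eq:FKRep} as a single expectation over the pair of paths:
\begin{equation}
\begin{split}
u(t,x)^2=\mathbb{E}_{x,x}\Big[\exp\Big(&\beta\int_0^tW(ds,B_s)+\beta\int_0^tW(ds,\tilde B_s)\\
&-\frac{\beta^2}{2}\int_0^t f(B_s,B_s)\,ds-\frac{\beta^2}{2}\int_0^t f(\tilde B_s,\tilde B_s)\,ds\Big)\Big],
\end{split}
\end{equation}
where $\mathbb{E}_{x,x}$ integrates over $(B,\tilde B)$ only. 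Since $f$ is uniformly bounded, the eventual answer is dominated by $e^{\beta^2 t\|f\|_\infty}$, so there is no integrability obstruction.

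Next I would take $\mathbf{E}$ over $W$ and interchange it with $\mathbb{E}_{x,x}$, which is legitimate by Tonelli's theorem because the $W$-integrand is a positive exponential. Then I would condition on the pair of paths $(B,\tilde B)$. Conditionally, the quantity
\[
Z\triangleq\beta\int_0^tW(ds,B_s)+\beta\int_0^tW(ds,\tilde B_s)
\]
is an Itô--Walsh integral of a deterministic (path-dependent) space-time kernel against the martingale measure $W$, hence a centered Gaussian random variable, whose conditional variance, using $\mathbf{E}[W(ds,y)W(ds,y')]=f(y,y')\,ds$, equals
\[
\mathrm{Var}\big(Z\,\big|\,B,\tilde B\big)=\beta^2\int_0^t f(B_s,B_s)\,ds+\beta^2\int_0^t f(\tilde B_s,\tilde B_s)\,ds+2\beta^2\int_0^t f(B_s,\tilde B_s)\,ds .
\]

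Finally I would apply the Gaussian Laplace identity $\mathbf{E}\big[e^{Z}\mid B,\tilde B\big]=\exp\big(\tfrac12\mathrm{Var}(Z\mid B,\tilde B)\big)$ and combine it with the deterministic (given the paths) Itô correction factor $\exp\big(-\tfrac{\beta^2}{2}\int_0^t f(B_s,B_s)\,ds-\tfrac{\beta^2}{2}\int_0^t f(\tilde B_s,\tilde B_s)\,ds\big)$ already present in the display above. The two diagonal contributions $\int_0^t f(B_s,B_s)\,ds$ and $\int_0^t f(\tilde B_s,\tilde B_s)\,ds$ cancel exactly against the corresponding halves of $\tfrac12\mathrm{Var}$, leaving precisely $\exp\big(\beta^2\int_0^t f(B_s,\tilde B_s)\,ds\big)$; taking $\mathbb{E}_{x,x}$ then yields \eqref{eq:2MomRep}. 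The main obstacle is the conditional step: making rigorous that, given the Brownian paths, $Z$ is Gaussian with the stated covariance, i.e. that testing the Itô--Walsh integral against the Gaussian martingale measure $W$ with deterministic integrands reproduces an ordinary Wiener integral with covariance kernel $f(y,y')\,ds$. This is standard once one checks that the relevant kernels lie in the reproducing-kernel Hilbert space of $W$, which is immediate from the boundedness of $f$, but it is the only point that is not purely formal.
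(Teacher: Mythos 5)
Your proof is correct and follows essentially the same route as the paper's: both condition on the pair of Brownian paths and exploit the Gaussian structure of the Itô--Walsh integral, with your explicit Gaussian Laplace identity $\mathbf{E}[e^Z\mid B,\tilde B]=\exp(\tfrac12\mathrm{Var}(Z\mid B,\tilde B))$ being the same computation the paper packages as the unit expectation of the exponential martingale $\exp(\beta N_t(B)+\beta N_t(\tilde B)-\tfrac12\beta^2 V_t(B,\tilde B))$. The diagonal cancellation you describe is exactly the step the paper performs when it isolates the cross term $\beta^2\int_0^t f(B_s,\tilde B_s)\,ds$.
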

\begin{proof}
By using the Feynman-Kac representation (\ref{eq:FKRep}), one can
write
\[
u(t,x)^{2}=\mathbb{E}_{x,x}\big[\exp\big(\beta N_{t}(B)+\beta N_{t}(\tilde{B})-\frac{1}{2}\beta^{2}\int_{0}^{t}\big(f(B_{s},B_{s})+f(\tilde{B}_{s},\tilde{B}_{s})\big)ds\big)\big],
\]
where we set $N_{t}(\omega)\triangleq\int_{0}^{t}W(ds,\omega_{s})$
for each fixed path $\omega$ in $M$. Note that $N_{t}(\omega)$
is a Gaussian martingale with quadratic variation $\int_{0}^{t}f(\omega_{s},\omega_{s})ds$
(under $\mathbf{E}$). In addition, given two fixed paths $\omega^{1},\omega^{2}$
in $M$, the process $N_{t}(\omega^{1})+N_{t}(\omega^{2})$ is a Gaussian
martingale with quadratic variation 
\begin{equation}
V_{t}(\omega^{1},\omega^{2})\triangleq\int_{0}^{t}f(w_{s}^{1},w_{s}^{1})ds+\int_{0}^{t}f(w_{s}^{2},w_{s}^{2})ds+2\int_{0}^{t}f(w_{s}^{1},w_{s}^{2})ds.\label{eq:2MomRepPf1}
\end{equation}
In particular, one has 
\begin{equation}
\mathbf{E}\big[\exp\big(\beta N_{t}(\omega^{1})+\beta N_{t}(\omega^{2})-\frac{1}{2}\beta^{2}V_{t}(\omega^{1},\omega^{2})\big)\big]=1.\label{eq:2MomRepPf2}
\end{equation}
It follows from (\ref{eq:2MomRepPf1}, \ref{eq:2MomRepPf2}) that
\begin{align*}
 & \mathbf{E}\big[u(t,x)^{2}\big]\\
 & =\mathbf{E}\mathbb{E}_{x,x}\Big[\exp\big(\beta N_{t}(B)+\beta N_{t}(\tilde{B})-\frac{1}{2}\beta^{2}V_{t}(B,\tilde{B})+\beta^{2}\int_{0}^{t}f(B_{s},\tilde{B}_{s})ds\big)\Big]\\
 & =\mathbb{E}_{x,x}\Big[\mathbf{E}\big[\exp\big(\beta N_{t}(B)+\beta N_{t}(\tilde{B})-\frac{1}{2}\beta^{2}V_{t}(B,\tilde{B})\big]\exp\big(\beta^{2}\int_{0}^{t}f(B_{s},\tilde{B}_{s})ds\big)\Big]\\
 & =\mathbb{E}_{x,x}\Big[\exp\big(\beta^{2}\int_{0}^{t}f(B_{s},\tilde{B}_{s})ds\big)\Big].
\end{align*}
This gives the desired relation (\ref{eq:2MomRep}).
\end{proof}

\subsection{Some hyperbolic estimates}

We recall two fundamental estimates in hyperbolic geometry. The first
one is about uniform estimates for the heat kernel.

\begin{lem}\label{lem:HKEst}(\cite[Theorem 3.1]{DM98}) Let $H_{t}(\rho(x,y))$
be the heat kernel on $M$. Then one has 
\begin{equation}
H_{t}(\rho)\asymp t^{-d/2}\exp\big(-\frac{(d-1)^{2}t}{4}-\frac{\rho^{2}}{4t}-\frac{(d-1)\rho}{2}\big)(1+\rho+t)^{\frac{d-3}{2}}(1+\rho)\label{eq:HKEst}
\end{equation}
uniformly for $\rho\geqslant0$ and $t>0$. Here ``$A\asymp B$''
means $c_{1}B\leqslant A\leqslant c_{2}B$ with some universal constants
$c_{1},c_{2}>0.$

\end{lem}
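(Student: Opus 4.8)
This is the classical two-sided heat-kernel estimate of Davies and Mandouvalos, so I will sketch how one reconstructs its proof rather than merely cite it. The backbone is the dimension-shift recursion for the radial heat kernel on the curvature $-1$ space form (going back to Millson and to Debiard--Gaveau--Mazet): writing $p^{(d)}_t(\rho)$ for the heat kernel on $\mathbb{H}^d$,
\begin{equation*}
p^{(d+2)}_t(\rho)=-\frac{e^{-dt}}{2\pi\sinh\rho}\,\frac{\partial}{\partial\rho}\,p^{(d)}_t(\rho),
\end{equation*}
which one checks by verifying that the right-hand side solves the radial heat equation on $\mathbb{H}^{d+2}$ with the correct initial data, using the intertwining relation between the radial Laplacians of $\mathbb{H}^{d+2}$ and $\mathbb{H}^d$ together with the spectral shift $d$. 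Iterating this relation reduces everything to the two base cases $d=2$ and $d=3$; the exponential bookkeeping is consistent, since $1+3+\cdots+(2m-1)=m^2=(d-1)^2/4$ for $d=2m+1$ and the accumulated even shifts likewise reproduce $(d-1)^2/4$.

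For the base case $d=3$ one has the exact formula $p^{(3)}_t(\rho)=(4\pi t)^{-3/2}\,\frac{\rho}{\sinh\rho}\,e^{-t-\rho^2/(4t)}$, and the claimed bound follows from the elementary uniform comparison $\frac{\rho}{\sinh\rho}\asymp(1+\rho)e^{-\rho}$ for $\rho\geqslant0$ (both sides are comparable to $1$ near $\rho=0$ and to $\rho e^{-\rho}$ as $\rho\to\infty$), the polynomial factor $(1+\rho+t)^{(d-3)/2}$ being $1$ when $d=3$. For the base case $d=2$ one uses the Millson-type integral representation
\begin{equation*}
p^{(2)}_t(\rho)=\frac{\sqrt2\,e^{-t/4}}{(4\pi t)^{3/2}}\int_\rho^\infty\frac{s\,e^{-s^2/(4t)}}{\sqrt{\cosh s-\cosh\rho}}\,ds,
\end{equation*}
and the work is a Laplace-/Watson-type analysis near the endpoint singularity $s\downarrow\rho$. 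I would substitute $s=\rho+u$, use $\cosh s-\cosh\rho\asymp u(\sinh\rho+u)$ and $s\,e^{-s^2/4t}\asymp(\rho+u)\,e^{-\rho^2/4t}e^{-\rho u/2t}e^{-u^2/4t}$, and split into the regimes $\rho\lesssim1$, $1\lesssim\rho\lesssim t$ and $\rho\gtrsim t$; in each regime the $u$-integral can be computed up to constants, yielding the factor $e^{-\rho^2/4t-\rho/2}(1+\rho+t)^{-1/2}(1+\rho)$, with the matching lower bound obtained by keeping track of where the mass of the integral sits.

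The inductive step is the delicate part: $\asymp$ is not preserved under $\partial_\rho$, so one cannot differentiate the two-sided bound directly and must carry a sharper hypothesis. In odd dimension $d=2m+1$, iterating the recursion down to $\mathbb{H}^1=\mathbb{R}$ gives the closed form
\begin{equation*}
p^{(2m+1)}_t(\rho)=\frac{e^{-m^2t}}{(2\pi)^m(4\pi t)^{1/2}}\Big(-\frac1{\sinh\rho}\frac{\partial}{\partial\rho}\Big)^m e^{-\rho^2/(4t)}=\frac{e^{-m^2t}}{(2\pi)^m(4\pi t)^{1/2}}\,Q_m(\rho,t)\,e^{-\rho^2/(4t)},
\end{equation*}
where $Q_0\equiv1$ and $Q_{m+1}=\tfrac1{\sinh\rho}Q_m'-\tfrac{\rho}{2t\sinh\rho}Q_m$. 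One then proves by induction on $m$ that $|Q_m(\rho,t)|\asymp t^{-m}e^{-m\rho}(1+\rho+t)^{m-1}(1+\rho)$, which is exactly what is needed. The upper bound is routine term-counting; the lower bound is the real point, and it rests on the fact that the two terms defining $Q_{m+1}$ never cancel — indeed each term of $Q_m$ has a definite sign on $\rho>0$ (e.g. $Q_1=-\frac{\rho}{2t\sinh\rho}$ and $Q_2=\frac{\rho\cosh\rho-\sinh\rho}{2t\sinh^3\rho}+\frac{\rho^2}{4t^2\sinh^2\rho}$ are manifestly of one sign since $\rho\cosh\rho>\sinh\rho$), so $|Q_{m+1}|$ dominates the larger of its two pieces, and identifying which piece wins according to whether $\rho\lesssim t$ or $\rho\gtrsim t$ yields the stated equivalence. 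For even $d=2m$ the same scheme runs with $\mathbb{H}^2$ as base: iterate $-\tfrac1{\sinh\rho}\partial_\rho$ under the integral sign in the representation above and track the resulting (bulkier) kernel through the same three $\rho$-regimes.

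The main obstacle is thus twofold: the even base case $d=2$, which demands an honest Laplace-method treatment of a singular integral with output stratified by the size of $\rho/t$; and propagating the \emph{lower} bound through the recursion, which hinges on the no-cancellation/sign-definiteness structure of the iterated radial derivatives. An alternative, dimension-uniform route avoids the recursion: start from the spherical (Fourier--Helgason) representation $p^{(d)}_t(\rho)=c_d\int_0^\infty e^{-t(\lambda^2+(d-1)^2/4)}\varphi_\lambda(\rho)\,|\mathbf{c}(\lambda)|^{-2}\,d\lambda$, insert the known uniform estimates for the spherical functions $\varphi_\lambda(\rho)$ and the Harish-Chandra $\mathbf{c}$-function, and perform a stationary-phase/Laplace analysis in $\lambda$; this works in all dimensions at once but shifts the technical burden onto the sharp spherical-function bounds.
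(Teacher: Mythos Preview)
The paper does not prove this lemma: it is stated with a citation to \cite[Theorem 3.1]{DM98} and used as a black box throughout. So there is no ``paper's own proof'' to compare against, and your sketch already goes well beyond what the authors provide.

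As an outline of the Davies--Mandouvalos argument your sketch is broadly sound: the Millson recursion, the exact $d=3$ formula, the integral representation in $d=2$, and the reduction to sharp control of the iterated radial derivatives $Q_m$ are indeed the ingredients. You have also correctly flagged the two genuine difficulties (the Laplace analysis of the $d=2$ integral across the three regimes in $\rho/t$, and the preservation of the \emph{lower} bound through the recursion). One caution: the claim that the terms of $Q_m$ carry definite signs and therefore never cancel is easy to verify by hand for $m=1,2$ as you do, but for general $m$ this is not obvious from the recursion $Q_{m+1}=\tfrac{1}{\sinh\rho}Q_m'-\tfrac{\rho}{2t\sinh\rho}Q_m$ alone---differentiating a sign-definite expression can produce sign changes, and the no-cancellation structure is exactly the subtle point Davies and Mandouvalos handle with care. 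If you were to write this out in full you would need either a sharper inductive hypothesis on the structure of $Q_m$ (not just its size) or to fall back on the spherical-function route you mention at the end. For the purposes of the present paper, however, a citation suffices.
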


The second estimate is a reversed triangle inequality that is only
valid in hyperbolic geometry. 

\begin{lem}{\label{lem:RevTri}}(\cite[Lemma 3.4]{HL10}) Let $\Delta ABC$
be a hyperbolic triangle with geodesic edges $a,b,c$. Then one has
\begin{equation}
0\leqslant b+c-a\leqslant\log\frac{2}{1-\cos A}.\label{eq:RevTri}
\end{equation}

\end{lem}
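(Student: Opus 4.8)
The plan is to treat the two inequalities in \eqref{eq:RevTri} separately. The left-hand inequality $b+c-a\geqslant0$ is nothing but the ordinary triangle inequality $\rho(B,C)\leqslant\rho(A,B)+\rho(A,C)$, so there is nothing to prove there; the entire content sits in the upper bound, which I would derive from the hyperbolic law of cosines together with an elementary comparison.

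First a reduction: the two geodesic rays issuing from $A$ through $B$ and through $C$ have initial vectors at $A$ spanning a $2$-dimensional subspace of $T_A\mathbb{H}^d$, whose image under $\exp_A$ is a totally geodesic submanifold isometric to $\mathbb{H}^2$ (if $A,B,C$ are collinear the statement is trivial or degenerate); being totally geodesic it also contains the edge $a$, so all three side lengths and the angle $A$ coincide with their intrinsic values in this copy of $\mathbb{H}^2$, and I may assume $d=2$. The hyperbolic law of cosines for the triangle with sides $a,b,c$ opposite $A,B,C$ then reads
\[
\cosh a=\cosh b\cosh c-\cos A\,\sinh b\sinh c
=\frac{1-\cos A}{2}\,\cosh(b+c)+\frac{1+\cos A}{2}\,\cosh(b-c),
\]
where the second equality is just the product-to-sum formulas $2\cosh b\cosh c=\cosh(b+c)+\cosh(b-c)$ and $2\sinh b\sinh c=\cosh(b+c)-\cosh(b-c)$.

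Now set $\mu\triangleq\frac{1-\cos A}{2}\in(0,1]$ and $\lambda\triangleq\log\frac{2}{1-\cos A}=-\log\mu\geqslant0$, so $e^{-\lambda}=\mu$. If $b+c\leqslant\lambda$, then $b+c-a\leqslant b+c\leqslant\lambda$ since $a\geqslant0$, and we are done. Otherwise $b+c>\lambda$, equivalently $e^{-(b+c)}<\mu$; since $\cosh$ is strictly increasing on $[0,\infty)$ and both $a$ and $b+c-\lambda$ are nonnegative, it suffices to verify $\cosh a\geqslant\cosh(b+c-\lambda)$. Expanding $\cosh(b+c-\lambda)=\tfrac12\bigl(\mu e^{b+c}+\mu^{-1}e^{-(b+c)}\bigr)$ and subtracting it from the displayed identity, the $e^{b+c}$-terms cancel and a short computation yields
\[
\cosh a-\cosh(b+c-\lambda)=(1-\mu)\Bigl(\cosh(b-c)-\frac{(1+\mu)\,e^{-(b+c)}}{2\mu}\Bigr)\geqslant0,
\]
because $1-\mu\geqslant0$ while $\cosh(b-c)\geqslant1\geqslant\frac{1+\mu}{2}\geqslant\frac{(1+\mu)e^{-(b+c)}}{2\mu}$, the last step using $e^{-(b+c)}<\mu\leqslant1$. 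This forces $a\geqslant b+c-\lambda$, i.e. $b+c-a\leqslant\lambda$, as required.

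I do not expect a genuine obstacle here: once the law of cosines is in hand the argument is short and self-contained. The only points deserving a little care are the opening reduction to a totally geodesic $\mathbb{H}^2$ (so that the planar identity applies verbatim regardless of the ambient dimension) and the separate treatment of the degenerate regime $b+c\leqslant\lambda$, in which the exponential estimate would otherwise point the wrong way. A model-dependent alternative---placing $A$ at the origin of the Poincar\'e disk and computing $a$ explicitly from $b$, $c$ and $A$---also works but is more cumbersome, so I would prefer the law-of-cosines route above.
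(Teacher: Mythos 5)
Your proof is correct and self-contained: the reduction to a totally geodesic copy of $\mathbb{H}^2$ is exactly what is needed to justify applying the planar law of cosines, the product-to-sum rewriting of $\cosh a$ is right, and the case split on whether $b+c$ exceeds $\lambda=\log\tfrac{2}{1-\cos A}$ neatly handles the regime where $\cosh(b+c-\lambda)$ would otherwise not be well-defined as $\cosh$ of a nonnegative number. The paper itself does not prove this lemma but only cites \cite[Lemma 3.4]{HL10}; your law-of-cosines argument is the standard derivation and, to the best of my knowledge, is the same in spirit as the one in the cited reference, so there is nothing further to reconcile.
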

\begin{rem}
Lemma \ref{lem:HKEst} shows that the Laplacian has a strictly positive
bottom spectrum $\lambda_{0}=(d-1)^{2}/4$ and the heat kernel decays
like $t^{-3/2}e^{-\lambda_{0}t}$ as $t\rightarrow\infty.$ A remarkable
point about Lemma \ref{lem:RevTri} is that the estimate is uniform
in $a,b,c$ (with the angle $A$ fixed). In particular, when
$b,c$ are very large (so is $a$ as a function of $b,c$ when $A$
is fixed), taking the detour path $(b,c)$ to go from $B$ to $C$
is not-so-different from directly taking the geodesic path $a$. These two properties
are both consequences of negative curvature and are drastically different
from the Euclidean situation. Their implications on Brownian motion are contained
in the estimates (\ref{eq:LowBdDist1}, \ref{eq:FlucEst}) below. 
\end{rem}

\subsection{Reduction to a single integral estimate}

To prove Theorem \ref{thm1-intro} (a), we begin by applying triangle
inequality to the Feynman-Kac representation (\ref{eq:2MomRep}) so that 
\begin{equation}
\mathbf{E}\big[u(t,x)^{2}\big]\leqslant\mathbb{E}_{x,x}\Big[\exp\big(\beta^{2}\int_{0}^{\infty}|f|(B_{t},\tilde{B}_{t})dt\big)\Big]=:I(\beta).\label{eq:IBet3}
\end{equation}
To ease notation, we will assume $f\geqslant0$ (otherwise just regard $|f|$
as $f$). One has 
\begin{align*}
\exp\Big(\beta^{2}\int_{0}^{\infty}f(B_{t},\tilde{B}_{t})dt\Big) & =\sum_{n=0}^{\infty}\frac{\beta^{2n}}{n!}\Big(\int_{0}^{\infty}f(B_{t},\tilde{B}_{t})dt\Big)^{n}\\
 & =\sum_{n=0}^{\infty}\beta^{2n}\int_{0<t_{1}<\cdots<t_{n}<\infty}\prod_{k=1}^{n}f(B_{t_{k}},\tilde{B}_{t_{k}})dt_{1}\cdots dt_{n}.
\end{align*}
Note that $(B_{t},\tilde{B}_{t})$ can be viewed as the Brownian motion on the
product space $M\times M$ (equipped with the product metric), whose
heat kernel is clearly given by 
\[
P_{t}((x,y),(x',y'))=H_{t}(\rho(x,x'))H_{t}(\rho(y,y')).
\]
It follows that 
\begin{align}
I(\beta) & =\sum_{n=0}^{\infty}\beta^{2n}\int_{0<t_{1}<\cdots<t_{n}<\infty}\int_{(M\times M)^{n}}\prod_{k=1}^{n}f(x_{k},y_{k})\nonumber \\
 & \ \ \ \times P_{t_{k}-t_{k-1}}\big((x_{k-1},y_{k-1}),(x_{k},y_{k})\big)d{\bf t}d{\bf x}d{\bf y},\label{eq:IBet1}
\end{align}
where $x_{0}=y_{0}\triangleq x$, $t_{0}\triangleq0$ and 
\[
d{\bf t}\triangleq dt_{1}\cdots dt_{n},\ d{\bf x}d{\bf y}\triangleq\prod_{k=1}^{n}{\rm vol}_{M}(dx_{k}){\rm vol}_{M}(dy_{k}).
\]
Let us further denote $z_{k}\triangleq(x_{k},y_{k})$ to ease notation.
By applying a change of variables $s_{k}\triangleq t_{k}-t_{k-1}$
in (\ref{eq:IBet1}) and noting that each $s_{k}$ has independent integral range
$(0,\infty)$, one finds that 
\begin{align}
I(\beta) & =\sum_{n=0}^{\infty}\beta^{2n}\int_{M^{2}}f(z_{1})\Big(\int_{0}^{\infty}P_{t}(z_{0},z_{1})dt\Big)dz_{1}\times\cdots\times\int_{M^{2}}f(z_{n-1})\nonumber \\
 & \ \ \ \Big(\int_{0}^{\infty}P_{t}(z_{n-2},z_{n-1})dt\Big)dz_{n-1}\int_{M^{2}}f(z_{n})\Big(\int_{0}^{\infty}P_{t}(z_{n-1},z_{n})dt\Big)dz_{n}.\label{eq:IBet2}
\end{align}
Now it becomes clear that the conclusion of Theorem \ref{thm1-intro}-(a)  will follow
directly from the lemma below.
\begin{lem}
\label{lem:KeyLem2MomUp}There exists a positive constant $\Lambda$,
such that 
\begin{equation}
\int_{M\times M}f(x',y')dx'dy'\int_{0}^{\infty}P_{t}((x,y),(x',y'))dt\leqslant\Lambda\label{eq:KeyLem2MomUp}
\end{equation}
for all $(x,y)\in M\times M.$
\end{lem}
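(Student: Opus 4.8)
The plan is to reduce the double integral over $M\times M$ to something controllable by exploiting the reversed triangle inequality (Lemma \ref{lem:RevTri}) together with the sharp heat kernel bounds (Lemma \ref{lem:HKEst}). First I would carry out the time integration explicitly: using the upper bound in \eqref{eq:HKEst}, the Green's function $G(\rho)\triangleq\int_0^\infty H_t(\rho)\,dt$ is finite (the bottom of the spectrum $\lambda_0=(d-1)^2/4>0$ guarantees convergence at $t=\infty$, and the $t^{-d/2}$ singularity is integrable near $t=0$ after being tempered by the $e^{-\rho^2/4t}$ factor), and one gets a pointwise estimate of the form $G(\rho)\lesssim (1+\rho)^{a}e^{-(d-1)\rho}$ for an explicit polynomial power $a$. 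Hence $\int_0^\infty P_t((x,y),(x',y'))\,dt = G(\rho(x,x'))\,G(\rho(y,y'))$ decays like $e^{-(d-1)(\rho(x,x')+\rho(y,y'))}$ up to polynomial corrections. The key point is that this product of Green's functions, against the volume measure, is integrable because the volume of a ball of radius $r$ in $\mathbb H^d$ grows like $e^{(d-1)r}$, so each factor $G(\rho(x,x'))\,{\rm vol}_M(dx')$ is a finite measure — but without the extra decay from $f$ this would only give boundedness of $\int_M\int_M G\,G\,dx'dy'$, not the decay in the base point we don't even need; what we do need is uniformity in $(x,y)$, and that is automatic by homogeneity of $\mathbb H^d$.

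So the real content is: after integrating out $t$, show
\[
\int_{M\times M} f(x',y')\,G(\rho(x,x'))\,G(\rho(y,y'))\,{\rm vol}_M(dx')\,{\rm vol}_M(dy')\leqslant\Lambda
\]
uniformly in $(x,y)$. By homogeneity we may fix $x=y=o$ the origin. The danger region is where both $\rho(o,x')$ and $\rho(o,y')$ are large, since then the volume growth $e^{(d-1)\rho(o,x')}e^{(d-1)\rho(o,y')}$ exactly cancels the decay of $G\cdot G$, and one is left needing $f$ to supply the convergence. Here is where Lemma \ref{lem:RevTri} enters: writing $\rho'\triangleq\rho(o,x')$, $\rho''\triangleq\rho(o,y')$, and $\rho(x',y')$ for the third side of the geodesic triangle $\triangle o x' y'$ with angle $A$ at $o$, the reversed triangle inequality gives $\rho' + \rho'' - \rho(x',y')\leqslant \log\frac{2}{1-\cos A}$, i.e. $\rho(x',y')\geqslant \rho'+\rho'' - \log\frac{2}{1-\cos A}$. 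Thus on the part of the configuration space where the angle $A$ between $x'$ and $y'$ (seen from $o$) is bounded away from $0$, the distance $\rho(x',y')$ is comparable to $\rho'+\rho''$, and the assumed decay \eqref{eq:UpDecayAssump} gives $f(x',y')\lesssim (\rho'+\rho'')^{-\alpha}$.

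I would then set up the integral in geodesic polar coordinates around $o$ for each of $x'$ and $y'$: ${\rm vol}_M(dx') = \sinh^{d-1}(\rho')\,d\rho'\,d\sigma(\theta')$ and similarly for $y'$, and write $d\sigma(\theta')\,d\sigma(\theta'')$ in terms of the angle $A$ between the two directions. The product $\sinh^{d-1}(\rho')\sinh^{d-1}(\rho'')\,G(\rho')\,G(\rho'')$ is then bounded by a polynomial in $(\rho',\rho'')$ (the exponentials cancel), so the radial integral $\int\!\!\int (1+\rho'+\rho'')^{b}(\rho'+\rho'')^{-\alpha}\,d\rho'\,d\rho''$ over the large-distance region converges precisely because $\alpha>1$ — after changing to variables $r=\rho'+\rho''$ and $w=\rho'-\rho''$, the $w$-integration contributes a factor of order $r$, and $\int^\infty r\cdot r^{b}\cdot r^{-\alpha}\,e^{-?}$... no: the exponentials have cancelled, so convergence must come entirely from $r^{-\alpha}$ against the polynomial, which fails for any fixed polynomial power $b$. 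This signals that the cancellation of exponentials is not exact and one must keep a residual exponential decay, or — more likely the intended route — one does \emph{not} cancel all the exponential: the correct reading of \eqref{eq:HKEst} is $G(\rho)\asymp (1+\rho)^{?}e^{-(d-1)\rho}$ while the volume element is $\asymp e^{(d-1)\rho}$, leaving a polynomial; convergence of the radial integral then genuinely requires combining the small-angle region (handled by the finiteness of the angular measure and the boundedness of $f$, with the volume/Green tradeoff still leaving enough room because the angular cone is thin) with the bounded-angle region (handled by $\rho(x',y')\gtrsim\rho'+\rho''$ and $\alpha>1$). The main obstacle, therefore, is organizing this dichotomy cleanly: precisely quantifying, via Lemma \ref{lem:RevTri}, how small the angle $A$ must be for the reversed triangle inequality to fail to give $\rho(x',y')\gtrsim\rho'+\rho''$, showing that the angular measure of that bad cone decays fast enough (like $(\rho'\wedge\rho'')^{-(d-1)}$ or so, by spherical geometry) to beat the leftover volume growth, and patching the two regions together with the near-diagonal region where $f$ is merely bounded but $G\cdot G$ is integrable near $\rho'=\rho''=0$. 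I would also need the elementary fact that $\int_M G(\rho(o,x'))\,{\rm vol}_M(dx')<\infty$, which is just $\widehat{G}(0)=1/\lambda_0<\infty$ in the spherical-function calculus, to close the bounded-$f$ regions.
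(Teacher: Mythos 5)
There is a genuine gap that derails the argument at its core: you wrote $\int_0^\infty P_t((x,y),(x',y'))\,dt = G(\rho(x,x'))\,G(\rho(y,y'))$, but since $P_t = H_t\otimes H_t$, the time integral of the product is \emph{not} the product of the time integrals; $\int_0^\infty H_t(\rho_1)H_t(\rho_2)\,dt \neq \bigl(\int_0^\infty H_t(\rho_1)\,dt\bigr)\bigl(\int_0^\infty H_t(\rho_2)\,dt\bigr)$. This is not a technicality: the individual Green's function satisfies $G(\rho)\asymp e^{-(d-1)\rho}$ (the polynomial corrections in \eqref{eq:HKEst} cancel after Laplace asymptotics), so $G(\rho)\sinh^{d-1}\rho\asymp 1$, and after your change of variables $r=\rho'+\rho''$, $w=\rho'-\rho''$ the radial integral on the bounded-angle region is $\asymp\int^\infty r\cdot r^{-\alpha}\,dr$, which requires $\alpha>2$, not $\alpha>1$. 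You noticed exactly this (``convergence must come entirely from $r^{-\alpha}$ against the polynomial, which fails for any fixed polynomial power $b$''), but misdiagnosed the cause as an inexact exponential cancellation; the real cause is the erroneous factorization. The correct product Green's function $\widetilde G(\rho_1,\rho_2)\triangleq\int_0^\infty H_t(\rho_1)H_t(\rho_2)\,dt$ is \emph{not} maximal along a ray but rather, after multiplying by the volume factor $e^{(d-1)(\rho_1+\rho_2)}$, it carries a residual Gaussian decay in the transverse variable $w$ (the exponent is $\asymp -\tfrac{(d-1)}{2}(\sqrt{r^2+w^2}-r)\approx -\tfrac{(d-1)w^2}{4r}$ for $|w|\ll r$), so the $w$-integral contributes only $\sqrt r$ rather than $r$; combined with the prefactor this gives $\int^\infty r^{-\alpha}\,dr$, which converges precisely for $\alpha>1$. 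In other words, the extra decay you need is encoded in the fact that $B_t$ and $\tilde B_t$ share the same running time, a constraint your factorization throws away. A secondary error: $\int_M G(\rho(o,x'))\,\mathrm{vol}_M(dx')=\int_0^\infty\bigl(\int_M H_t\,d\mathrm{vol}\bigr)\,dt=\int_0^\infty 1\,dt=+\infty$, not $1/\lambda_0$; the spectral computation applies to $L^2$ functions, not to the constant $1$, so this cannot be used to close the bounded-$f$ region.

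For comparison, the paper sidesteps the Green's-function computation entirely by \emph{not} integrating out time first. It keeps the integrand $\mathbb{E}_{x,y}[f(B_t,\tilde B_t)]$ at each fixed $t$, introduces the event $M_t$ on which the normalized radial processes $\xi_t,\eta_t$ are not too negative and the angles $\Phi_t,\Psi_t$ are not too small, and uses the reversed triangle inequality twice to get $\rho(B_t,\tilde B_t)\gtrsim t$ on $M_t$, giving $\int_1^\infty t^{-\alpha}\,dt<\infty$ for $\alpha>1$; the complement $M_t^c$ is handled by the exponential tails of Lemmas \ref{lem:FlucEst} and \ref{lem:AngEst}. This is a cleaner bookkeeping of precisely the same geometric information you were trying to extract (the $\sqrt t$-fluctuation of the radius and the small-angle cone), but it avoids both the product-Green's-function subtlety and the multi-region patching you identified as your main obstacle. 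Your instinct to use Lemmas \ref{lem:HKEst} and \ref{lem:RevTri} together, and your identification of the small-angle cone as the dangerous region, are both on target; the route would be salvageable with the correct $\widetilde G$, but as written the central estimate is wrong.
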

\begin{proof}[Proof of Theorem \ref{thm1-intro}-(a)]Presuming that
(\ref{eq:KeyLem2MomUp}) is true, one has from (\ref{eq:2MomRepPf2},
\ref{eq:IBet3}) that 
\[
I(\beta)\leqslant\sum_{n=0}^{\infty}\beta^{2n}\Lambda^{n}.
\]
The conclusion thus follows by taking $\beta_{0}\triangleq1/\sqrt{\Lambda}.$ 

\end{proof}

\subsection{Proof of Lemma \ref{lem:KeyLem2MomUp}}

It remains to prove Lemma \ref{lem:KeyLem2MomUp}. Note that the LHS
of (\ref{eq:KeyLem2MomUp}) equals 
\[
\int_{0}^{\infty}\mathbb{E}_{x,y}\big[f(B_{t},\tilde{B}_{t})\big]dt,
\]
where $B_{t},\tilde{B}_{t}$ are independent Brownian motions starting at $x,y$
respectively. The idea of bounding this integral is very simple; recalling
the assumption (\ref{eq:UpDecayAssump}) on the decay rate of $f$,
one basically replaces $f(B_{t},\tilde{B}_{t})$ by $C\rho(B_{t},\tilde{B}_{t})^{-\alpha}$
and applies the approximation $\rho(B_{t},\tilde{B}_{t})\approx2(d-1)t$
for all large $t$. As we will see below, this is legal since the
heat kernel estimate (\ref{eq:HKEst}) forces the radial process $\rho(x,B_{t})$
to behave like $(d-1)t$ with a $\sqrt{t}$-fluctuation,
while the reversed triangle inequality (\ref{eq:RevTri}) forces $\rho(B_{t},\tilde{B}_{t})\approx\rho(x,B_{t})+\rho(y,\tilde{B}_{t})$ with  high probability. 

We assume without loss of generality that (\ref{eq:UpDecayAssump}) holds for all $x,y\in M$
(by enlarging the constant $C$ if necessary). For each $t>1$, we
will also introduce a localising event $M_{t}$ (its precise definition
is given by (\ref{eq:MainEvt}) below) on which the distance $\rho(B_{t},\tilde{B}_{t})$
can be effectively estimated. One then has the decomposition
\begin{align}
 & \int_{0}^{\infty}\mathbb{E}_{x,y}\big[f(B_{t},\tilde{B}_{t})\big]dt\nonumber \\
 & \leqslant\|f\|_{\infty}+\int_{1}^{\infty}\mathbb{E}_{x,y}\big[f(B_{t},\tilde{B}_{t});M_{t}^{c}\big]dt+\int_{1}^{\infty}\mathbb{E}_{x,y}\big[f(B_{t},\tilde{B}_{t});M_{t}\big]dt\nonumber \\
 & \leqslant\|f\|_{\infty}+\|f\|_{\infty}\int_{1}^{\infty}\mathbb{P}_{x,y}(M_{t}^{c})dt+C\int_{1}^{\infty}\mathbb{E}_{x,y}\big[\rho(B_{t},\tilde{B}_{t})^{-\alpha};M_{t}\big]dt.\label{eq:IntDecomp}
\end{align}
We first analyse the last integral in (\ref{eq:IntDecomp}), which
will also motivate the definition of $M_{t}.$ By applying Lemma \ref{lem:RevTri}
to both triangles $\Delta B_{t}y\tilde{B}_{t}$ and $\Delta B_{t}xy$,
one finds that 
\begin{align}
\rho(B_{t},\tilde{B}_{t}) & \geqslant\rho(B_{t},y)+\rho(y,\tilde{B}_{t})-\log\frac{2}{1-\cos\Psi_{t}}\nonumber \\
 & \geqslant\rho(B_{t},x)+\rho(x,y)-\log\frac{2}{1-\cos\Phi_{t}}+\rho(y,\tilde{B}_{t})-\log\frac{2}{1-\cos\Psi_{t}}\nonumber \\
 & \geqslant\rho(B_{t},x)+\rho(y,\tilde{B}_{t})-\log\frac{2}{1-\cos\Phi_{t}}-\log\frac{2}{1-\cos\Psi_{t}}.\label{eq:LowBdDist1}
\end{align}
where $\Phi_{t}\triangleq\angle B_{t}xy$ and $\Psi_{t}\triangleq\angle B_{t}y\tilde{B}_{t}$
(both $\Phi_{t},\Psi_{t}\in[0,\pi]$),
\begin{comment}
\begin{figure}[H]  
\begin{center}   
\includegraphics[scale=0.30]{hyper_triang}
\end{center} 
\end{figure}\noindent 
\end{comment}
Let us define the renormalized processes 
\[
\xi_{t}\triangleq\frac{\rho(x,B_{t})-(d-1)t}{\sqrt{t}},\ \eta_{t}\triangleq\frac{\rho(y,\tilde{B}_{t})-(d-1)t}{\sqrt{t}}.
\]
Then (\ref{eq:LowBdDist1}) becomes 
\begin{equation}
\rho(B_{t},\tilde{B}_{t})\geqslant2(d-1)t+\sqrt{t}(\xi_{t}+\eta_{t})-\log\frac{2}{1-\cos\Phi_{t}}-\log\frac{2}{1-\cos\Psi_{t}}.\label{eq:LowBdDist3}
\end{equation}
Let $\delta$ be a fixed positive number such that $4\delta<2(d-1).$
For each $t>1$, we now introduce the earlier announced event:
\begin{equation}
M_{t}\triangleq\Big\{\min\{\xi_{t},\eta_{t}\}>-\delta\sqrt{t},\ \max\big\{\log\frac{2}{1-\cos\Phi_{t}},\log\frac{2}{1-\cos\Psi_{t}}\big\}\leqslant\delta t\Big\}.\label{eq:MainEvt}
\end{equation}
It follows from (\ref{eq:LowBdDist3}) that 
\begin{equation}
\int_{1}^{\infty}\mathbb{E}_{x,y}\big[\rho(B_{t},\tilde{B}_{t})^{-\alpha};M_{t}\big]dt\leqslant\int_{1}^{\infty}\big((2(d-1)-4\delta)t\big)^{-\alpha}<\infty,\label{eq:MainEst}
\end{equation}
since $\alpha>1$ by assumption. 

Next, we estimate $\mathbb{P}_{x,y}(M_{t}^{c})$. This is done by the following
two lemmas (one for estimating $\xi_{t},\eta_{t}$ and the other for $\Phi_{t},\Psi_{t}$). 
\begin{lem}\label{lem:FlucEst}
For any $\delta>0,$ there exists some positive constant $K_{1}$
such that 
\begin{equation}
\mathbb{P}_{x,y}\big(\xi_{t}\leqslant-\delta\sqrt{t}\big)\leqslant e^{-K_{1}t}\label{eq:FlucEst}
\end{equation}
for all $t>1$.
\end{lem}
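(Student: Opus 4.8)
The plan is to deduce the estimate from the heat kernel upper bound in Lemma~\ref{lem:HKEst} together with the exponential volume growth of $\mathbb{H}^{d}$ --- this is precisely the mechanism indicated in the paragraph preceding the lemma. By the rotational symmetry of $\mathbb{H}^{d}$ the probability does not depend on $x$, and unwinding the definition of $\xi_{t}$ shows that $\{\xi_{t}\leqslant-\delta\sqrt{t}\}=\{\rho(x,B_{t})\leqslant(d-1-\delta)t\}$. If $\delta\geqslant d-1$ then $(d-1-\delta)t\leqslant 0$ and this event is null for $t>0$, so there is nothing to prove; assume henceforth $r:=(d-1-\delta)t>0$.

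First I would pass to geodesic polar coordinates centred at $x$, using that the geodesic sphere of radius $s$ in $\mathbb{H}^{d}$ carries surface measure $\omega_{d-1}\sinh^{d-1}(s)$, with $\omega_{d-1}$ the area of the Euclidean unit $(d-1)$-sphere. This gives
\[
\mathbb{P}_{x,y}\big(\xi_{t}\leqslant-\delta\sqrt{t}\big)=\mathbb{P}_{x}\big(\rho(x,B_{t})\leqslant r\big)=\omega_{d-1}\int_{0}^{r}H_{t}(s)\,\sinh^{d-1}(s)\,ds .
\]
Next I insert the upper bound of Lemma~\ref{lem:HKEst},
\[
H_{t}(s)\leqslant C\,t^{-d/2}\exp\!\Big(-\tfrac{(d-1)^{2}t}{4}-\tfrac{s^{2}}{4t}-\tfrac{(d-1)s}{2}\Big)(1+s+t)^{\frac{d-3}{2}}(1+s),
\]
and bound $\sinh^{d-1}(s)\leqslant 2^{-(d-1)}e^{(d-1)s}$. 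The factor $e^{(d-1)s}$ partially cancels the $e^{-(d-1)s/2}$, leaving $e^{g(s)}$ with $g(s):=-\tfrac{s^{2}}{4t}+\tfrac{(d-1)s}{2}$. On $0\leqslant s\leqslant r<(d-1)t$ the remaining algebraic factors $t^{-d/2}(1+s+t)^{\frac{d-3}{2}}(1+s)$ are dominated by a single polynomial in $t$ (uniformly for $t\geqslant1$), while $g$ is increasing there --- its unconstrained maximiser is $s=(d-1)t>r$ --- so $g(s)\leqslant g(r)=\tfrac{((d-1)^{2}-\delta^{2})t}{4}$. Combining, and using $\int_{0}^{r}ds=r\leqslant(d-1)t$, we obtain
\[
\mathbb{P}_{x}\big(\rho(x,B_{t})\leqslant r\big)\leqslant Q(t)\,e^{-\frac{(d-1)^{2}t}{4}}\,e^{\frac{((d-1)^{2}-\delta^{2})t}{4}}=Q(t)\,e^{-\frac{\delta^{2}}{4}t},
\]
where $Q$ is a polynomial. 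Since $Q(t)e^{-\delta^{2}t/4}\to0$ as $t\to\infty$ and the probability is trivially at most $1$, a routine adjustment of the constant (slightly decreasing the exponent to absorb $Q$ on the finite time range where $Q(t)e^{-\delta^{2}t/4}$ exceeds the target) yields a constant $K_{1}>0$ with $\mathbb{P}_{x,y}(\xi_{t}\leqslant-\delta\sqrt{t})\leqslant e^{-K_{1}t}$ for all $t>1$. Running the identical argument for the independent copy $\tilde{B}$ started from $y$ gives the same bound for $\eta_{t}$.

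I do not expect a genuine obstacle here: the one substantive point is the constrained optimisation $\sup_{0\leqslant s\leqslant r}g(s)=g(r)$, which is exactly where the ballistic escape speed $d-1$ of hyperbolic Brownian motion enters and creates the decisive exponential gain $e^{-\delta^{2}t/4}$; everything else is bookkeeping of subexponential prefactors. As a sanity check (and an alternative short proof), the radial process satisfies $d\rho_{t}=\sqrt{2}\,d\beta_{t}+(d-1)\coth(\rho_{t})\,dt$ for a standard one-dimensional Brownian motion $\beta$, whence $\rho(x,B_{t})\geqslant\sqrt{2}\,\beta_{t}+(d-1)t$ and $\mathbb{P}\big(\rho(x,B_{t})\leqslant(d-1-\delta)t\big)\leqslant\mathbb{P}\big(\sqrt{2}\,\beta_{t}\leqslant-\delta t\big)\leqslant e^{-\delta^{2}t/4}$ by the Gaussian tail estimate.
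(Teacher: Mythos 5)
Your main argument is essentially the paper's: pass to geodesic polar coordinates, insert the Davies--Mandouvalos upper bound from Lemma~\ref{lem:HKEst}, absorb the volume factor $\sinh^{d-1}s\leqslant 2^{-(d-1)}e^{(d-1)s}$, and balance the resulting exponentials. The only organizational difference is that the paper completes the square and substitutes $r=(\rho-(d-1)t)/\sqrt{t}$, turning the integrand into an explicit Gaussian $e^{-r^2/4}$ times a polynomial, then restricts to $I=(-\infty,-\delta\sqrt{t}]$; you instead bound the combined exponent $g(s)=-s^2/(4t)+(d-1)s/2$ by its value at the right endpoint $r=(d-1-\delta)t$, using that $g$ is increasing on $[0,(d-1)t]$. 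Both yield the same decisive factor $e^{-\delta^2 t/4}$ and a polynomial prefactor to absorb, so the computations are interchangeable.

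Your closing ``sanity check'' via the radial SDE $d\rho_t=\sqrt{2}\,d\beta_t+(d-1)\coth(\rho_t)\,dt$ and the comparison $\rho(x,B_t)\geqslant\sqrt{2}\,\beta_t+(d-1)t$ is a genuinely different and shorter route that the paper does not take. It replaces the heat-kernel asymptotics entirely with the elementary observation $\coth\geqslant 1$ and a one-dimensional Gaussian tail bound, giving the clean constant $\delta^2/4$ with no polynomial prefactor to clean up. The trade-off is that it requires the It\^o description of the radial part (and the correct normalization for the generator $\Delta$ rather than $\tfrac12\Delta$), whereas the paper's argument stays entirely within the heat-kernel estimate it has already imported and will reuse elsewhere.
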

\begin{proof}
Recall that the volume form under geodesic polar coordinates with
respect to $x$ is ${\rm vol}=\sinh^{d-1}\rho d\rho d\sigma,$ where
$\rho$ is the distance to $x$ and $d\sigma$ is the volume form
for the unit sphere in $T_{x}M$. Given any $I\subseteq\mathbb{R}$,
by the heat kernel estimate (\ref{eq:HKEst}) one has 
\begin{align*}
\mathbb{P}_{x,y}(\xi_{t}\in I) & =\mathbb{P}_{x,y}\big((\rho(x,B_{t})-(d-1)t)/\sqrt{t}\in I\big)\\
 & \leqslant C_{1}t^{-d/2}\int_{\big\{(\rho,\sigma):\frac{\rho-(d-1)t}{\sqrt{t}}\in I\big\}}\exp\Big(-\frac{(d-1)^{2}t}{4}-\frac{\rho^{2}}{4t}-\frac{(d-1)\rho}{2}\Big)\\
 & \ \ \ \times(1+\rho+t)^{\frac{d-3}{2}}(1+\rho)\sinh^{d-1}\rho d\rho d\sigma\\
 & \leqslant C_{2}t^{-d/2}\int_{\big\{(\rho,\sigma):\frac{\rho-(d-1)t}{\sqrt{t}}\in I\big\}}\exp\Big(-\frac{(d-1)^{2}t}{4}-\frac{\rho^{2}}{4t}+\frac{(d-1)\rho}{2}\Big)\\
 & \ \ \ \times(1+\rho+t)^{\frac{d-1}{2}}d\rho,
\end{align*}
where we have used $\sinh\rho\leqslant e^{\rho}/2$ and also integrated
out the angular variable $\sigma.$ By applying a change of variables 
$r=[\rho-(d-1)t]\sqrt{t}$, it is easily seen that 
\[
\exp\Big(-\frac{(d-1)^{2}t}{4}-\frac{\rho^{2}}{4t}+\frac{(d-1)\rho}{2}\Big)=e^{-r^{2}/4}
\]
and (since $t>1$)
\[
t^{-d/2}(1+\rho+t)^{\frac{d-1}{2}}d\rho=t^{-\frac{d-1}{2}}(1+dt+\sqrt{t}r)^{\frac{d-1}{2}}dr\leqslant(1+d+|r|)^{\frac{d-1}{2}}dr.
\]
It follows that 
\begin{equation}
\mathbb{P}_{x,y}(\xi_{t}\in I)\leqslant C_{2}\int_{I}(1+d+|r|)^{\frac{d-1}{2}}e^{-r^{2}/4}dr.\label{eq:FlucTail}
\end{equation}
In our context, one takes $I=(-\infty,-\delta\sqrt{t}].$ It is straight
forward that the right hand side of (\ref{eq:FlucTail}) is bounded above by $e^{-C_{3}t}$
for some positive $C_{3}$ that depends on $\delta.$
\end{proof}
\begin{lem}\label{lem:AngEst}
There exists a universal constant $K_{2}>0$ such that 
\begin{equation}
\mathbb{P}_{x,y}\Big(\log\frac{2}{1-\cos\Phi_{t}}>\delta t\Big)+\mathbb{P}_{x,y}\Big(\log\frac{2}{1-\cos\Psi_{t}}>\delta t\Big)\leqslant K_{2}e^{-\delta t/2}\label{eq:AngEst}
\end{equation}
for all $\delta,t>0.$
\end{lem}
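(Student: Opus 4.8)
The plan is to estimate the angle $\Phi_t = \angle B_t x y$ directly, since $\Psi_t = \angle B_t y \tilde B_t$ is handled by the same argument (both are angles at a fixed vertex, between a fixed direction and the direction toward a hyperbolic Brownian motion at time $t$). The event $\{\log \frac{2}{1-\cos\Phi_t} > \delta t\}$ is precisely the event $\{1 - \cos\Phi_t < 2 e^{-\delta t}\}$, i.e. the event that the direction from $x$ to $B_t$ lies in a small spherical cap around the direction from $x$ to $y$, of angular "radius" $\Phi_t$ with $1-\cos\Phi_t < 2e^{-\delta t}$. So the whole point is to show that the \emph{angular part} of hyperbolic Brownian motion (in geodesic polar coordinates based at $x$) does not concentrate too sharply near any fixed direction.

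**First I would** use the rotational invariance of hyperbolic Brownian motion about its starting point $x$: in geodesic polar coordinates, the angular component $\theta_t \in S^{d-1}$ of $B_t$ has a distribution that is invariant under the isotropy group $O(d)$ acting on $T_xM$, hence its law is \emph{exchangeable} with respect to rotations fixing the direction $\hat{xy}$, and in particular the probability that $\theta_t$ falls in a cap of solid angle $\omega$ is at most $\omega / \omega_{d-1}$ times... no — more carefully, I would bound $\mathbb{P}_{x,y}(\Phi_t \in A)$ for a small cap $A$ by integrating the heat kernel over the corresponding region. Since $H_t(\rho(x,\cdot))$ depends only on the radial variable, the joint density of $(\rho_t, \theta_t)$ factorizes: $\rho_t$ and $\theta_t$ are \emph{independent}, and $\theta_t$ is \emph{uniformly distributed} on $S^{d-1}$. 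This is the crucial structural fact. Then $\mathbb{P}(\Phi_t \le \phi) = \sigma(\text{cap of angular radius } \phi)/\sigma(S^{d-1})$, which for small $\phi$ is $\asymp \phi^{d-1}$, and in all cases is $\le c\,(1-\cos\phi)^{(d-1)/2} \le c\,\phi^{d-1}$. Hence
\[
\mathbb{P}_{x,y}\Big(\log\frac{2}{1-\cos\Phi_t} > \delta t\Big) = \mathbb{P}_{x,y}\big(1 - \cos\Phi_t < 2e^{-\delta t}\big) \le c\,(2e^{-\delta t})^{(d-1)/2} \le c'\, e^{-(d-1)\delta t/2} \le c'\, e^{-\delta t/2}
\]
since $d \ge 2$. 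Doing the same for $\Psi_t$ and adding, one gets the claimed bound with $K_2$ a universal constant (one may take $K_2 = 2c'$ after absorbing the factor $2^{(d-1)/2}$).

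**The one subtlety** — and the step I would be most careful about — is the independence and uniformity of the angular part $\theta_t$ of $B_t$ for $\Psi_t$. For $\Phi_t$ this is immediate because $B_t$ starts at $x$ and we are measuring the angle at $x$. For $\Psi_t = \angle B_t y \tilde B_t$, the angle is measured at $y$, which is the starting point of $\tilde B_t$ but \emph{not} of $B_t$; however, we may condition on $B_t$ first, so that $y$ and the point $B_t$ are fixed, and then use that $\tilde B_t$ (started at $y$) has angular part at $y$ that is independent of $\rho(y,\tilde B_t)$ and uniform on $S^{d-1}$ — exactly as before. The angle $\angle B_t y \tilde B_t$ between the fixed direction $\hat{y B_t}$ and the uniform direction $\hat{y \tilde B_t}$ is therefore distributed exactly as $\Phi_t$ was, and the same cap estimate applies after taking expectation over $B_t$. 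So the proof is: (i) write the event as a spherical-cap event for the angular component; (ii) invoke rotational invariance of the hyperbolic heat kernel to get that the angular component is uniform on $S^{d-1}$ and independent of the radius; (iii) bound the cap measure by $c\,(1-\cos\phi)^{(d-1)/2}$; (iv) plug in $\phi$ with $1-\cos\phi = 2e^{-\delta t}$ and use $d \ge 2$ to get the exponential bound, conditioning first on $B_t$ for the $\Psi_t$ term.
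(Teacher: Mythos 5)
Your proposal is correct and takes essentially the same approach as the paper: rewrite the event as a small-angle event, invoke the $O(d)$-rotational invariance of hyperbolic Brownian motion about its starting point to conclude that the angular component is uniform on $\mathbb{S}^{d-1}$ (and independent of the radial component), and handle $\Psi_t$ by conditioning on $B_t$ and using independence of $B$ and $\tilde B$. The only cosmetic difference is calculational: you bound the probability directly by the spherical cap measure, obtaining $c\,(1-\cos\phi)^{(d-1)/2}\le c'e^{-(d-1)\delta t/2}\le c'e^{-\delta t/2}$, whereas the paper bounds the density of the angle by a constant $L$ and uses Jordan's inequality $\sin\theta\ge 2\theta/\pi$ to convert $\sin(\Phi_t/2)<e^{-\delta t/2}$ into $\Phi_t<\pi e^{-\delta t/2}$, giving $L\pi e^{-\delta t/2}$. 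Both give the stated exponent.
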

\begin{proof}
Write $B_{t}=(\rho(x,B_{t}),\Theta_{t})$ in geodesic polar coordinates,
where $\Theta_{t}\in S^{1}T_{x}M$ (the unit sphere on $T_{x}M$)
is its angular component. Note that $\mathrm{SO}(d)$ acts on $M$ by isometries
(rotations with respect to $x$ leaving $x$ fixed). As a result,
$\Theta_{t}$ is a uniform random variable on $\mathbb{S}^{d-1}$. In addition,
given any fixed direction $v\in S^{1}T_{x}M$, for the same reason
the angle between $\Theta_{t}$ and $v$ follows a canonical distribution
on $[0,\pi]$ that is independent of $v$ and $t$ (which can of course
be computed explicitly but is not needed here). Let $L$ denote an upper bound of the density
function of this distribution. 

Simple algebra shows that 
\begin{equation}
\log\frac{2}{1-\cos\Phi_{t}}>\delta t\iff\sin\frac{\Phi_{t}}{2}<e^{-\delta t/2}.\label{eq:Angle1}
\end{equation}
Since $\Phi_{t}/2\in[0,\pi/2]$, one can apply Jordan's inequality
(i.e. $\sin\theta\geqslant2\theta/\pi$ for all $\theta\in[0,\pi/2]$) to
see that (\ref{eq:Angle1}) implies $\Phi_{t}<\pi e^{-\delta t/2}.$
But $\Phi_{t}$ is just the angle between $\Theta_{t}$ and the direction
of geodesic $xy$. As a result, one obtains that 
\[
\mathbb{P}_{x,y}\Big(\log\frac{2}{1-\cos\Phi_{t}}>\delta t\Big)\leqslant L\pi e^{-\delta t/2}.
\]
To analyse $\Psi_{t}$, let $\tilde{\Theta}_{t}$ denote the angular
component of $\tilde{B}_{t}$ and let $W_{t}$ be the direction of
the geodesic $yB_{t}.$ Note that $\Psi_{t}$ is the angle between
$\tilde{\Theta}_{t}$ and $W_{t}$. Since $\tilde{\Theta}_{t}$ and
$W_{t}$ are independent (because $B$ and $\tilde{B}$ are), one
also has 
\begin{align*}
\mathbb{P}_{x,y}\Big(\log\frac{2}{1-\cos\Psi_{t}}>\delta t\Big) & =\mathbb{E}_{x,y}\big[\mathbb{P}_{x,y}\big(\log\frac{2}{1-\cos\Psi_{t}}>\delta t\big|W_{t}\big)\big]\\
 & \leqslant\mathbb{E}_{x,y}\big[\mathbb{P}_{x,y}\big(\Psi_t<\pi e^{-\delta t/2}\big|W_{t}\big)\big]\leqslant L\pi e^{-\delta t/2}.
\end{align*}
The desired estimate (\ref{eq:AngEst}) thus follows.
\end{proof}
It follows from (\ref{eq:FlucEst}, \ref{eq:AngEst}) that 
\begin{align*}
\mathbb{P}_{x,y}(M_{t}^{c}) & \leqslant\mathbb{P}_{x,y}(\xi_{t}\leqslant-\delta\sqrt{t})+\mathbb{P}(\eta_{t}\leqslant-\delta\sqrt{t})+\mathbb{P}_{x,y}\Big(\log\frac{2}{1-\cos\Phi_{t}}>\delta t\Big)\\
 & \ \ \ +\mathbb{P}_{x,y}\Big(\log\frac{2}{1-\cos\Psi_{t}}>\delta t\Big)\leqslant2e^{-K_{1}t}+K_{2}e^{-\delta t/2}.
\end{align*}
By substituting this and (\ref{eq:MainEst}) into (\ref{eq:IntDecomp}),
one arrives at 
\begin{align*}
 & \int_{0}^{\infty}\mathbb{E}_{x,y}\big[f(B_{t},\tilde{B}_{t})\big]dt\\
 & \leqslant\|f\|_{\infty}+\int_{1}^{\infty}\Big[\|f\|_{\infty}\big(2e^{-K_{1}t}+K_{2}e^{-\delta t/2}\big)+C\big((2(d-1)-4\delta)t\big)^{-\alpha}\Big]dt.
\end{align*}
This is clearly a finite constant that is independent of the starting
points $x,y$. 

The proof of Lemma \ref{lem:KeyLem2MomUp} is now
complete.

\section{Lower estimates}

In this section, we prove Theorem \ref{thm1-intro}-(b), as well as Theorems \ref{thm2-intro} and \ref{thm3-intro}.

\subsection{Lower bound for large $\beta$}

We first prove Theorem \ref{thm2-intro} which asserts that the growth of $\mathbf{E}[u(t,x)^{2}]$
is always $e^{O(t)}$ when the parameter $\beta$ is large. 
\begin{comment}
Suppose that the covariance function is given by $f(x,y)=F(\rho(x,y))$
with some strictly positive, continuous function $F:[0,\infty)\rightarrow(0,\infty)$.
Then there exists $\beta_{1}>0,$ such that for any $\beta>\beta_{1}$,
one has 
\begin{equation}
\underset{t\rightarrow\infty}{\underline{\lim}}\frac{1}{t}\log\mathbb{E}[u(t,x)^{2}]>0.\label{eq:LargeBLower}
\end{equation}
\end{comment}

\begin{proof}[Proof of Theorem \ref{thm2-intro}]
We again use the Feynman-Kac representation (\ref{eq:2MomRep}). Let
$r>0$ be chosen such that
\[
m_{f}(r)\triangleq\inf\big\{ f(y,z):\rho(y,x)<r/2,\ \rho(z,x)<r/2\big\}>0,
\]where $x$ is the common starting point of both $B_{t}$ and $\tilde{B}_{t}$. Such an $r$ exists since $f(x,x)>0$ (as  a variance). Define 
\[
\sigma_{r/2}\triangleq\inf\{t:\rho(B_{t},x)>r/2\},\ \tilde{\sigma}_{r/2}\triangleq\inf\{t:\rho(\tilde{B}_{t},x)>r/2\}.
\]
It follows from (\ref{eq:2MomRep}) that 
\begin{align}
\mathbf{E}[u(t,x)^{2}] & \geqslant\mathbb{E}_{x,x}\big[\exp\big(\beta^{2}\int_{0}^{t}f(B_{s},\tilde{B}_{s})ds\big);\sigma_{r/2}>t,\ \tilde{\sigma}_{r/2}>t\big]\nonumber \\
 & \geqslant\exp\big(\beta^{2}t\cdot m_{f}(r)\big)\mathbb{P}_x(\sigma_{r/2}>t)^{2}\label{eq:LarLower1}
\end{align}
for all $t$ and $\beta$. It is well-known that 
\[
\mathbb{P}(\sigma_{r/2}>t)\sim e^{-\lambda_{r/2}t}\ \ \ \text{as }t\rightarrow\infty,
\]
where $\lambda_{\rho}>0$ denotes the principal Dirichlet eigenvalue
of $-\Delta$ on the geodesic ball $B(x,\rho)$. In particular, there
exist constants $C_{r},T_{r}>0$ such that 
\begin{equation}
\mathbb{P}(\sigma_{r/2}>t)\geqslant C_{r}e^{-\lambda_{r/2}t}\ \ \ \forall t>T_{r}.\label{eq:TailExit}
\end{equation}
By substituting (\ref{eq:TailExit}) into (\ref{eq:LarLower1}), one
finds that 
\[
\mathbf{E}[u(t,x)^{2}]\geqslant C_{r}^{2}\exp\big((\beta^{2}m_f(r)-2\lambda_{r/2})t\big)\ \ \ \forall t>T_{r},\beta>0.
\]
This proves the theorem.
\end{proof}

We now discuss an Euclidean result in order to compare it with Theorem~\ref{thm2-intro}.
\begin{proposition}\label{prop:EucLow}
In the Euclidean case $M=\mathbb{R}^{d}$, suppose that $f(x,y)=F(|x-y|)$ where $F$ is a given function
satisfying the following estimate:
\begin{equation}
F(u)\geqslant\frac{C}{(1+u)^{\alpha}}\ \ \ \forall u\geqslant0\label{eq:EucAssumpLower}
\end{equation}
with given constants $C>0$ and $\alpha\in(0,2)$. Then the estimate (\ref{eq:LargeBLower}) is
valid for all $\beta>0.$
\end{proposition}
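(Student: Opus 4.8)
The plan is to combine the Euclidean Feynman--Kac representation for the second moment with a Brownian confinement estimate, in the spirit of the proof of Theorem~\ref{thm2-intro}; the one new ingredient is that the confining ball must be chosen \emph{large}, and it is precisely the hypothesis $\alpha<2$ that makes this possible for \emph{every} $\beta>0$.

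First, since the proof of Lemma~\ref{lem:2MomRep} uses no geometric input, the same computation gives in $M=\mathbb{R}^{d}$ that
\[
\mathbf{E}\big[u(t,x)^{2}\big]=\mathbb{E}_{x,x}\Big[\exp\Big(\beta^{2}\int_{0}^{t}F\big(|B_{s}-\tilde{B}_{s}|\big)\,ds\Big)\Big],
\]
where $B,\tilde{B}$ are independent standard Brownian motions started at $x$. Put $D_{s}\triangleq B_{s}-\tilde{B}_{s}$, which is a Brownian motion started at $0$ with $\langle D^{i}\rangle_{s}=2s$ (hence generator $\Delta$), and note $F\geqslant0$ by (\ref{eq:EucAssumpLower}). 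Fix $R>0$ and restrict to the confinement event $A_{R,t}\triangleq\{\sup_{0\leqslant s\leqslant t}|D_{s}|\leqslant R\}$. On $A_{R,t}$, monotonicity of $u\mapsto C(1+u)^{-\alpha}$ gives $\int_{0}^{t}F(|D_{s}|)\,ds\geqslant C(1+R)^{-\alpha}t$, so discarding the complement (the integrand being $\geqslant1$) yields
\[
\mathbf{E}\big[u(t,x)^{2}\big]\geqslant\exp\Big(\frac{\beta^{2}C}{(1+R)^{\alpha}}\,t\Big)\,\mathbb{P}_{0}\big(\tau_{R}>t\big),
\]
where $\tau_{R}$ is the exit time of $D$ from the ball $B(0,R)$.

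For the confinement cost, exactly as for (\ref{eq:TailExit}) one has $\mathbb{P}_{0}(\tau_{R}>t)\geqslant C_{R}e^{-\mu_{R}t}$ for all $t>T_{R}$, where $\mu_{R}>0$ is the principal Dirichlet eigenvalue of $-\Delta$ on $B(0,R)$; by Brownian scaling $\mu_{R}=c_{d}/R^{2}$ with $c_{d}=\mu_{1}>0$. Hence
\[
\mathbf{E}\big[u(t,x)^{2}\big]\geqslant C_{R}\exp\Big(\Big(\frac{\beta^{2}C}{(1+R)^{\alpha}}-\frac{c_{d}}{R^{2}}\Big)t\Big),\qquad t>T_{R}.
\]
Multiplying the exponential rate by $R^{2}$, it suffices to produce $R$ with $\beta^{2}C\,R^{2}(1+R)^{-\alpha}>c_{d}$; since $\alpha<2$ the left-hand side tends to $\infty$ as $R\to\infty$, so such an $R=R(\beta)$ exists for every $\beta>0$. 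Fixing it gives $\underset{t\to\infty}{\underline{\lim}}\,t^{-1}\log\mathbf{E}[u(t,x)^{2}]\geqslant\beta^{2}C(1+R)^{-\alpha}-c_{d}R^{-2}>0$, which is (\ref{eq:LargeBLower}).

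The computation is routine; the only real point — and the one place where $\alpha<2$ is indispensable — is the trade-off in the last step between the reward $\beta^{2}C(1+R)^{-\alpha}$ for keeping the two Brownian motions within distance $R$ of each other and the entropic cost $c_{d}/R^{2}$ of doing so over time $t$: the reward-to-cost ratio behaves like $\beta^{2}R^{2-\alpha}$, and hence can always be pushed above $1$ by enlarging $R$. When $\alpha\geqslant2$ this ratio stays bounded, which is exactly why in the borderline and supercritical regimes — and in the hyperbolic Theorem~\ref{thm2-intro}, where the cost of confining to a geodesic ball does \emph{not} shrink as the ball grows — one must instead take $\beta$ large.
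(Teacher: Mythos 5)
Your proof is correct and matches the paper's argument in all essentials: confine the Brownian dynamics to a ball over $[0,t]$, bound the potential below there by $C(1+R)^{-\alpha}$, pay an entropic cost governed by a Dirichlet eigenvalue that scales like $R^{-2}$ by Brownian scaling, and observe that $\alpha<2$ lets the reward dominate the cost for every $\beta>0$ by taking $R$ large. The one variation is that you confine the difference process $D_s=B_s-\tilde B_s$, exploiting translation invariance and the hypothesis $f(x,y)=F(|x-y|)$, whereas the paper confines both $B$ and $\tilde B$ separately to $B(x,r/2)$ and pays $2\lambda_{r/2}\asymp r^{-2}$ --- an equivalent bookkeeping that has the advantage of transferring directly to the hyperbolic setting of Theorem~\ref{thm2-intro}, where no difference process is available.
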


\begin{proof}
We continue to use the notation introduced in the proof of Theorem \ref{thm2-intro}. In the Euclidean case, the key observation is
that $\lambda_{\rho}=\lambda_{1}/\rho^{2}$ for all $\rho>0$.
On the other hand, by the assumption (\ref{eq:EucAssumpLower}) one
has 
\[
m_{f}(r)\geqslant\frac{C}{(1+r)^{\alpha}}.
\]
It follows from (\ref{eq:LarLower1}) that 
\[
\mathbf{E}[u(t,x)^{2}]\geqslant C_{r}^{2}\exp\Big(\big(\frac{C\beta^{2}}{(1+r)^{\alpha}}-\frac{8\lambda_{1}}{r^{2}}\big)t\Big)
\]for all $t>T_r$ and $\beta>0$.
Given an arbitrary $\beta,$ since $\alpha\in(0,2)$ one can choose
$r$ sufficiently large so that 
\[
\frac{C\beta^{2}}{(1+r)^{\alpha}}-\frac{8\lambda_{1}}{r^{2}}>0.
\]
This implies the growth property (\ref{eq:LargeBLower}) for every
$\beta>0$.
\end{proof}
\begin{rem}
The above argument breaks down in the hyperbolic case since 
\[
\lim_{r\rightarrow\infty}2\lambda_{r/2}=\frac{(d-1)^{2}}{2}>0,\ \lim_{r\rightarrow0^{+}}2\lambda_{r/2}=+\infty.
\]
For an arbitrary $\beta,$ one \textit{cannot} always make $\beta^{2}m_{f}(r)-2\lambda_{r/2}$
positive by suitably choosing $r.$ In fact, one already knows from the upper bound part in
Theorem \ref{thm3-intro} that the conclusion of Proposition \ref{prop:EucLow} is not true 
in the hyperbolic space when $\alpha\in(0,1)$ and $\beta$ is small. 
\end{rem}

\subsection{Lower bound for $\alpha\in(0,1)$}

In this section, we prove the following more quantitative version of Theorem~\ref{thm1-intro}-(b).
\begin{thm}
\label{thm:CrudeLower}Suppose that the covariance function $f$ is
nonnegative and satisfies the following estimate: 
\begin{equation}
f(x,y)\geqslant\frac{C}{\rho(x,y)^{\alpha}}\ \ \ \forall x,y:\rho(x,y)\geqslant R\label{eq:LowDecayCov}
\end{equation}
with suitable constants $C,R>0$ and $\alpha\in(0,1).$ Then there
exists a positive constant $K$ such that 
\begin{equation}\label{eq:CrudeLower}
\mathbf{E}[u^{2}(t,x)]\geqslant e^{K\beta^{2}t^{1-\alpha}}
\end{equation}
for all sufficiently large $t$.
\end{thm}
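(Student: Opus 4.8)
The plan is to run the single-integral reduction of Section 2 ``in reverse'', using that hyperbolic Brownian motion escapes linearly. Starting from the Feynman--Kac representation of Lemma \ref{lem:2MomRep} and applying Jensen's inequality to the convex function $\exp$ and the probability expectation $\mathbb{E}_{x,x}$,
\[
\mathbf{E}[u(t,x)^2]=\mathbb{E}_{x,x}\Big[\exp\Big(\beta^2\int_0^t f(B_s,\tilde B_s)\,ds\Big)\Big]\ge\exp\Big(\beta^2\int_0^t\mathbb{E}_{x,x}\big[f(B_s,\tilde B_s)\big]\,ds\Big),
\]
so it suffices to show $\int_0^t\mathbb{E}_{x,x}[f(B_s,\tilde B_s)]\,ds\ge K t^{1-\alpha}$ for all large $t$, with $K>0$ independent of $t$ and $\beta$. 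The exponent $t^{1-\alpha}$ then emerges automatically: since $\rho(x,B_s)$ and $\rho(x,\tilde B_s)$ are each of order $(d-1)s$, the distance $\rho(B_s,\tilde B_s)$ is of order $s$, and $\int_1^t s^{-\alpha}\,ds\asymp t^{1-\alpha}$ because $\alpha<1$.

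For the pointwise-in-$s$ bound, fix a small $\delta>0$ (any $\delta<1/2$ works) and set $L\triangleq 2(d-1)+1$. Let $\xi_s,\eta_s$ be the renormalised radial processes of $B$ and $\tilde B$ (both started at $x$), as in Section 2, let $\Phi_s\triangleq\angle B_s x\tilde B_s$, and introduce the good event
\[
G_s\triangleq\Big\{\,|\xi_s|\le\delta\sqrt s,\ |\eta_s|\le\delta\sqrt s,\ \log\tfrac{2}{1-\cos\Phi_s}\le\delta s\,\Big\}.
\]
On $G_s$ the ordinary triangle inequality gives $\rho(B_s,\tilde B_s)\le\rho(x,B_s)+\rho(x,\tilde B_s)\le(2(d-1)+2\delta)s\le Ls$, while the reversed triangle inequality (Lemma \ref{lem:RevTri}) applied to the triangle $B_s x\tilde B_s$ gives $\rho(B_s,\tilde B_s)\ge\rho(x,B_s)+\rho(x,\tilde B_s)-\log\tfrac{2}{1-\cos\Phi_s}\ge(2(d-1)-3\delta)s\ge R$ once $s$ exceeds some $s_0$. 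Therefore, by the hypothesis \eqref{eq:LowDecayCov} and $f\ge 0$,
\[
\mathbb{E}_{x,x}\big[f(B_s,\tilde B_s)\big]\ge\mathbb{E}_{x,x}\big[C\,\rho(B_s,\tilde B_s)^{-\alpha};G_s\big]\ge\frac{C}{(Ls)^\alpha}\,\mathbb{P}_{x,x}(G_s),\qquad s\ge s_0.
\]

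To finish one bounds $\mathbb{P}_{x,x}(G_s^c)$: the events $\{|\xi_s|>\delta\sqrt s\}$ and $\{|\eta_s|>\delta\sqrt s\}$ have probability $\le 2e^{-K_1 s}$ by the tail estimate \eqref{eq:FlucTail} (which is valid for an arbitrary interval $I$, here applied to both $(-\infty,-\delta\sqrt s]$ and $[\delta\sqrt s,\infty)$), and $\{\log\tfrac{2}{1-\cos\Phi_s}>\delta s\}$ has probability $\le K_2 e^{-\delta s/2}$ by the argument of Lemma \ref{lem:AngEst} --- the angle between the two independent uniform directions $\Theta_s,\tilde\Theta_s$ has the same canonical bounded-density law, as one sees by conditioning on $\tilde\Theta_s$. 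Hence $\mathbb{P}_{x,x}(G_s)\ge 1/2$ for $s\ge s_1$ with some $s_1\ge s_0$, so $\mathbb{E}_{x,x}[f(B_s,\tilde B_s)]\ge\tfrac{C}{2(Ls)^\alpha}$ there, and $\int_0^t\mathbb{E}_{x,x}[f(B_s,\tilde B_s)]\,ds\ge\tfrac{C}{2L^\alpha}\int_{s_1}^t s^{-\alpha}\,ds\ge K t^{1-\alpha}$ for all sufficiently large $t$ (e.g.\ $K=\tfrac{C}{4L^\alpha(1-\alpha)}$, using $\alpha<1$); combined with the Jensen bound above this yields $\mathbf{E}[u^2(t,x)]\ge e^{K\beta^2 t^{1-\alpha}}$.

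The one genuinely hyperbolic ingredient, and the step I expect to need the most care, is the lower bound $\rho(B_s,\tilde B_s)\ge R$ on $G_s$: it rests on the reversed triangle inequality together with the linear escape rate $(d-1)$, and it is precisely this that fails in Euclidean space, where $\rho(B_s,\tilde B_s)$ is only of order $\sqrt s$ and the critical exponent becomes $2$ instead. Everything else reduces to the radial- and angular-deviation estimates already established in Section 2.
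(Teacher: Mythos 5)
Your proof is correct. Both arguments begin identically, with Jensen's inequality reducing the claim to a lower bound of the form $\mathbb{E}_{x,x}[f(B_s,\tilde B_s)]\gtrsim s^{-\alpha}$ for all large $s$, and then integrating $s^{-\alpha}$ over $[s_0,t]$ to produce the $t^{1-\alpha}$ growth. Where you diverge from the paper is in how that pointwise-in-$s$ estimate is obtained. The paper passes to the hyperboloid model and proves the deterministic geometric Lemma~\ref{lem:HypLoc}: for two fixed angular cones $A,B\subseteq\mathbb{S}^{d-1}$ separated by more than $\pi/2$, any $y$ with direction in $A$ and $z$ with direction in $B$ satisfy $\rho(y,z)\geqslant\max\{\rho(y,o),\rho(z,o)\}$; it then restricts the product heat-kernel integral to $\sigma_1\in A$, $\sigma_2\in B$, $\rho_i\in[(d-1)s+a\sqrt s,(d-1)s+b\sqrt s]$, and evaluates directly using the Davies--Mandouvalos asymptotics. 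You instead introduce a good event $G_s$ controlling both radial deviations $\xi_s,\eta_s$ and the vertex angle $\Phi_s=\angle B_sx\tilde B_s$, apply the reversed triangle inequality (Lemma~\ref{lem:RevTri}) to $\Delta B_sx\tilde B_s$ to pin $\rho(B_s,\tilde B_s)$ between $(2(d-1)-3\delta)s$ and $Ls$ on $G_s$, and then show $\mathbb{P}(G_s)\geqslant 1/2$ using Lemma~\ref{lem:FlucEst} and the angle argument of Lemma~\ref{lem:AngEst}. Your event $G_s$ is precisely a two-sided version of the event $A_s$ that the paper itself uses in Theorem~\ref{thm:UpperSmallB}, so your route has the appeal of proving the lower bound with the very same toolkit as the matching upper bound, making the sharpness of $t^{1-\alpha}$ transparent; the paper's route avoids the angular tail estimate and replaces it with an exact hyperboloid-model computation, which is closer to first principles but less symmetric with the rest of Section 3. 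Both are valid; the hidden conditions you need ($\delta<1/2$ so that $2(d-1)+2\delta\leqslant L$ and $\delta<2(d-1)/3$ so that the lower bound is positive) are satisfied by your choice, and the use of \eqref{eq:FlucTail} for a two-sided deviation and of the independent-directions version of Lemma~\ref{lem:AngEst} are both legitimate.
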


\subsubsection{The hyperboloid model }

Our proof of Theorem \ref{thm:CrudeLower} relies on a simple localisation
argument which makes explicit use of the hyperboloid model. More specifically,
we define a Lorentzian inner product on $\mathbb{R}^{d+1}$ by 
\[
y*z\triangleq y_{1}z_{1}+\cdots+y_{d}z_{d}-y_{d+1}z_{d+1}
\]
and consider the space 
\[
M\triangleq\{(z_{1},\cdots,z_{d+1})\in\mathbb{R}^{d+1}:z*z=-1,z_{d+1}>0\}.
\]
The Lorentzian inner product induces a Riemannian metric on $M$ by
restriction, which makes $M$ into the space-form of curvature $K\equiv-1$.
The hyperbolic distance between $y,z\in M$ is computed by the formula
\begin{equation}
\cosh\rho(y,z)=-y*z.\label{eq:DistHyperboloid}
\end{equation}
In particular, for the base point $o\triangleq({\bf 0},1)$ one has
\[
\rho(y,o)={\rm arccosh}\,y_{d+1}.
\]
A useful coordinate system on $M$ is the (geodesic) polar coordinates.
Namely, given $\rho>0$ and $\sigma\in \mathbb{S}^{d-1}$ (the unit sphere
on the tangent space $T_{o}M$), the point with polar coordinates
$(\rho,\sigma)$ is given by $y=\exp_{o}(\rho\sigma)$ where $\exp_{o}$
denotes the exponential map at $o$. In other words, $y$ is the location
after travelling along the geodesic from $o$ in the direction $\sigma$
for a distance of $\rho.$ 
\begin{comment}\begin{figure}[H]  
\begin{center}   
\includegraphics[scale=0.33]{hyper}
\end{center} 
\end{figure}
\end{comment}

We state a simple geometric lemma that will be useful later on. 
\begin{lem}
\label{lem:HypLoc}There exist two subsets $A,B\subseteq \mathbb{S}^{d-1},$
such that
\[
\rho(y,z)\geqslant\max\{\rho(y,o),\rho(z,o)\}
\]
for all $y=(\rho_{1},\sigma_{1}),z=(\rho_{2},\sigma_{2})$ with $\sigma_{1}\in A$
and $\sigma_{2}\in B$.
\end{lem}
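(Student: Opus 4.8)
The plan is to do everything inside the hyperboloid model, where the distance formula \eqref{eq:DistHyperboloid} makes the statement essentially a one-line inequality. First I would write the two points in ambient coordinates on $\mathbb{R}^{d+1}$: identifying $T_{o}M$ with the horizontal hyperplane, so that a direction $\sigma\in\mathbb{S}^{d-1}$ is a Euclidean unit vector in $\mathbb{R}^{d}$, the standard parametrisation of geodesics issuing from $o$ gives, for $y=(\rho_{1},\sigma_{1})$ and $z=(\rho_{2},\sigma_{2})$,
\[
y=(\sinh\rho_{1}\,\sigma_{1},\ \cosh\rho_{1}),\qquad z=(\sinh\rho_{2}\,\sigma_{2},\ \cosh\rho_{2}).
\]
A direct computation of the Lorentzian product then yields $y*z=\sinh\rho_{1}\sinh\rho_{2}\langle\sigma_{1},\sigma_{2}\rangle-\cosh\rho_{1}\cosh\rho_{2}$, where $\langle\cdot,\cdot\rangle$ is the Euclidean inner product on $\mathbb{R}^{d}$, and hence by \eqref{eq:DistHyperboloid}
\[
\cosh\rho(y,z)=\cosh\rho_{1}\cosh\rho_{2}-\langle\sigma_{1},\sigma_{2}\rangle\sinh\rho_{1}\sinh\rho_{2}.
\]

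The second step is the elementary inequality that drives the lemma: if $\langle\sigma_{1},\sigma_{2}\rangle\leqslant0$, then since $\sinh\rho_{1},\sinh\rho_{2}\geqslant0$ the subtracted term is $\geqslant0$, so $\cosh\rho(y,z)\geqslant\cosh\rho_{1}\cosh\rho_{2}$; using $\cosh\rho_{i}\geqslant1$ this is at least $\max\{\cosh\rho_{1},\cosh\rho_{2}\}=\cosh(\max\{\rho_{1},\rho_{2}\})$, and monotonicity of $\cosh$ on $[0,\infty)$ gives $\rho(y,z)\geqslant\max\{\rho(y,o),\rho(z,o)\}$. So it suffices to pick $A,B\subseteq\mathbb{S}^{d-1}$ of positive surface measure (positivity is what the subsequent localisation argument will use) such that $\langle\sigma_{1},\sigma_{2}\rangle\leqslant0$ for all $\sigma_{1}\in A$, $\sigma_{2}\in B$.

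For the last step I would take a pair of antipodal spherical caps of angular radius $\pi/4$: fix any unit vector $e\in\mathbb{S}^{d-1}$ and set $A=\{\sigma:\langle\sigma,e\rangle\geqslant 1/\sqrt2\}$ and $B=\{\sigma:\langle\sigma,e\rangle\leqslant-1/\sqrt2\}$. Both are nonempty with positive measure, and if $\sigma_{1}\in A$, $\sigma_{2}\in B$ then their spherical angles to $e$ satisfy $\angle(\sigma_{1},e)\leqslant\pi/4$ and $\angle(\sigma_{2},e)\geqslant3\pi/4$, so the triangle inequality for the round metric on $\mathbb{S}^{d-1}$ forces $\angle(\sigma_{1},\sigma_{2})\geqslant\pi/2$, i.e.\ $\langle\sigma_{1},\sigma_{2}\rangle=\cos\angle(\sigma_{1},\sigma_{2})\leqslant0$, as needed.

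There is no genuine obstacle here; the work is entirely in choosing the right model. The only subtlety worth flagging is why one should insist on $\langle\sigma_{1},\sigma_{2}\rangle\leqslant0$ rather than merely that the two directions are ``far apart'': the radii $\rho_{1},\rho_{2}$ are left completely free in the statement, and if $\langle\sigma_{1},\sigma_{2}\rangle$ were allowed to be even slightly positive the inequality would break down when $z$ is close to $o$, where $\sinh\rho_{1}\sinh\rho_{2}$ is small but can still exceed the gap $\cosh\rho_{1}\cosh\rho_{2}-\cosh\rho_{1}$. Taking genuinely antipodal caps removes this issue and makes the bound hold for all radii simultaneously.
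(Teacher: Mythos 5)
Your proof is correct and follows essentially the same route as the paper: both work in the hyperboloid model, observe that the Lorentzian product $y*z$ has horizontal part $\sinh\rho_{1}\sinh\rho_{2}\langle\sigma_{1},\sigma_{2}\rangle\leqslant0$ once the angular directions are at least orthogonal, and then use $\cosh\rho_{i}\geqslant1$ (equivalently $z_{d+1}\geqslant1$) to conclude. The only difference is cosmetic: you make the antipodal caps explicit and phrase the comparison via $\cosh\rho(y,z)$, whereas the paper phrases it via $y*z\leqslant y*o$ and simply notes that such $A,B$ exist.
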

\begin{proof}
According to the relation (\ref{eq:DistHyperboloid}), one has 
\begin{equation}
\rho(y,z)\geqslant\rho(y,o)\iff y*z\leqslant y*o\iff\sum_{k=1}^{d}y_{k}z_{k}-y_{d+1}z_{d+1}\leqslant-y_{d+1}.\label{eq:GeoLocal}
\end{equation}
Suppose $A,B$ are two fixed regions on $\mathbb{S}^{d-1}$ (both having positive
spherical volumes) such that
\[
\angle(\sigma,\tau)>\frac{\pi}{2}\ \ \ \forall\sigma\in A,\tau\in B,
\]where $\angle(\sigma,\tau)$ denotes the angle between $O\sigma$ and $O\tau$ ($O$ is the center of $\mathbb{S}^{d-1}$).
Clearly, a choice of $(A,B)$ exists. Given $y=(\rho_{1},\sigma_{1}),z=(\rho_{2},\sigma_{2})$
with $\sigma_{1}\in A$ and $\sigma_{2}\in B$, one has $\sum_{k=1}^{d}y_{k}z_{k}\leqslant0$
and $-y_{d+1}z_{d+1}\leqslant-y_{d+1}$ since $z_{d+1}\geqslant1$.
It follows from (\ref{eq:GeoLocal}) that $\rho(y,z)\geqslant\rho(y,o)$
and by symmetric the same inequality holds with the right hand side replaced by
$\rho(z,o)$.
\end{proof}

\subsubsection{Proof of Theorem \ref{thm:CrudeLower}}

According to Jensen's inequality and the relation (\ref{eq:2MomRep}),
one has 
\begin{equation}
\mathbf{E}[u^{2}(t,x)]\geqslant\exp\Big(\beta^{2}\int_{0}^{t}\mathbb{E}_{x,x}\big[f(B_{s},\tilde{B}_{s})\big]ds\Big).\label{eq:CL3}
\end{equation}
It  suffices to lower bound the integral 
\[
\int_{0}^{t}\mathbb{E}_{x,x}\big[f(B_{s},\tilde{B}_{s})\big]ds.
\]
Recall that the pair $(B_{s},\tilde{B}_{s})$ is a Brownian motion on the product
manifold $M\times M$ (equipped with the product hyperbolic metric),
whose heat kernel is given by 
\begin{equation}
P_{s}((x,y),(z,w))=H_{s}(\rho(x,z))H_{s}(\rho(y,w)).\label{eq:ProdHK}
\end{equation}
It follows from (\ref{eq:ProdHK}) that 
\begin{align}
&\mathbb{E}_{x,x}[f(B_{s},\tilde{B}_{s})]\nonumber\\ & =\int_{M\times M}f(y,z)H_{s}(\rho(x,y))H_{s}(\rho(x,z)){\rm vol}_{M}(dy){\rm vol}_{M}(dz)\nonumber \\
 & \geqslant\int_{\{(y,z):\rho(y,z)>R\}}\frac{C_{1}}{\rho(y,z)^{\alpha}}H_{s}(\rho(x,y))H_{s}(\rho(x,z)){\rm vol}_{M}(dy){\rm vol}_{M}(dz),\label{eq:CL1}
\end{align}
where we also used the assumption (\ref{eq:LowDecayCov}) to reach
the last inequality. 

Now we are going to apply Lemma \ref{lem:HypLoc} in the hyperboloid
model with $x=o$ (the base point). Writing $y=(\rho_{1},\sigma_{1})$
and $z=(\rho_{2},\sigma_{2})$ in polar coordinates with respect to
$o$, the lemma shows that 
\[
\{(y,z):\rho_{1}>R,\rho_{2}>R,\sigma_{1}\in A,\sigma_{2}\in B\}\subseteq\{(y,z):\rho(y,z)>R\}.
\]
It follows from (\ref{eq:CL1}) that 
\begin{align}
\mathbb{E}_{x,x}[f(B_{s},\tilde{B}_{s})] & \geqslant|A|\cdot|B|\int_{R}^{\infty}\int_{R}^{\infty}\frac{C_{1}}{(\rho_{1}+\rho_{2})^{\alpha}}H_{s}(\rho_{1})H_{s}(\rho_{2})\nonumber \\
 & \ \ \ \times\sinh^{d-1}\rho_{1}\sinh^{d-1}\rho_{2}d\rho_{1}d\rho_{2},\label{eq:CL2}
\end{align}
where $|\cdot|$ means taking spherical volume on $S^{d-1}\subseteq T_{o}M$.
To further estimate the right hand side, we localize the double integral (\ref{eq:CL2})
in the region 
\[
\frac{\rho_{i}-(d-1)s}{\sqrt{s}}\in[a,b]\ \ \ (i=1,2),
\]where $a,b$ are two fixed numbers.
It is clear that $\rho_{i}>R$ when $s$ is large. By applying the
hyperbolic heat kernel estimate (\ref{eq:HKEst}) and a change of
variables 
\[
\rho_{i}\longleftrightarrow r_{i}:\rho_{i}=(d-1)s+r_{i}\sqrt{s},
\]
after explicit calculation one finds that the right hand side of (\ref{eq:CL2})
is further bounded from below by 
\[
C_{2}s^{-\alpha}\int_{a}^{b}\int_{a}^{b}e^{-(r_{1}^{2}+r_{2}^{2})/4}dr_{1}dr_{2},
\]
for all large $s$, where $C_{2}$ is a positive constant that is
independent of $s$. To summarise, one concludes that 
\[
\mathbb{E}_{x,x}[f(B_{s},\tilde{B}_{s})]\geqslant C_{3}s^{-\alpha}\ \ \ \forall s>s_{0},
\]
with suitable positive constants $C_{3},s_{0}$. By substituting this
inequality into (\ref{eq:CL3}), one obtains that 
\[
\mathbf{E}[u^{2}(t,x)]\geqslant\exp\Big(C_{3}\beta^{2}\int_{s_{0}}^{t}s^{-\alpha}ds\Big)\geqslant\exp\big(C_{4}\beta^{2}t^{1-\alpha}\big)\ \ \ \forall t>s_{0}.
\]
The proof of Theorem \ref{thm:CrudeLower} is thus complete.

\subsection{Sharpness of (\ref{eq:CrudeLower}) for small $\beta$}

We have seen from Theorem \ref{thm2-intro} that the lower bound (\ref{eq:CrudeLower}) is not sharp when $\beta$ is large (the growth rate is $e^{O(t)}$ in that case). In the following result, we prove that (\ref{eq:CrudeLower}) is optimal for small $\beta$. This result shows that Proposition \ref{prop:EucLow} is not valid for small $\beta$ in the hyperbolic setting, which demonstrates a phenomenon that is not present in the Euclidean situation. Together with Theorem \ref{thm:CrudeLower}, one thus obtains the conclusion of Theorem \ref{thm3-intro}.

\begin{thm}\label{thm:UpperSmallB}
Suppose that the covariance function $f$ satisfies the following
estimate:
\begin{equation}
|f(x,y)|\leqslant\frac{C}{(1+\rho(x,y))^{\alpha}}\ \ \ \forall x,y\in M,\label{eq:AssumpUpper}
\end{equation}
where $C>0$ and $\alpha\in(0,1)$ are given parameters. Then there
exists a positive constant $\bar{\beta}_{0}$ such that 
\[
\sup_{t\geqslant1,x\in M}\frac{1}{t^{1-\alpha}}\log\mathbf{E}[u(t,x)^{2}]<\infty
\]
for every $\beta\in(0,\bar{\beta}_{0})$. 
\end{thm}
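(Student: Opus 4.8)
The plan is to estimate $\mathbf{E}[u(t,x)^2]$ directly through the Feynman-Kac formula (\ref{eq:2MomRep}). By homogeneity of $\mathbb{H}^d$ we may assume $x=o$, so it suffices to bound $\mathbb{E}_{o,o}\big[\exp\big(\beta^{2}\int_0^t|f|(B_s,\tilde B_s)\,ds\big)\big]$, where $B,\tilde B$ are independent hyperbolic Brownian motions started at $o$. The mechanism is the one already behind Theorem~\ref{thm1-intro}: the two motions separate at linear speed $2(d-1)$, so $\rho(B_s,\tilde B_s)\asymp s$, and by (\ref{eq:AssumpUpper}) the integrand decays like $s^{-\alpha}$; since $\alpha<1$ the time integral $\int_1^t s^{-\alpha}\,ds$ is of order $t^{1-\alpha}$ rather than bounded. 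To implement this I would fix a constant $c\in(0,2(d-1))$ and introduce the nested events
\[
E_N\triangleq\big\{\rho(B_s,\tilde B_s)\geq cs\ \text{ for all }s\geq N\big\},\qquad N\geq1,
\]
so that $E_1\subseteq E_2\subseteq\cdots$, and split the expectation according to $\Omega=E_1\cup\bigcup_{N\geq2}(E_N\setminus E_{N-1})\cup\bigcap_N E_N^c$.

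The analytic heart of the argument is a tail bound $\mathbb{P}_{o,o}(E_N^c)\leq Ce^{-\kappa N}$ for some $\kappa>0$. I would obtain it by covering $[N,\infty)$ with the unit intervals $[n,n+1]$, $n\geq N$, and on each writing $\rho(B_s,\tilde B_s)\geq\rho(B_n,\tilde B_n)-\rho(B_n,B_s)-\rho(\tilde B_n,\tilde B_s)$. The term $\rho(B_n,\tilde B_n)$ is handled exactly as in the proof of Lemma~\ref{lem:KeyLem2MomUp}: the reversed triangle inequality (\ref{eq:RevTri}) together with Lemmas~\ref{lem:FlucEst} and~\ref{lem:AngEst} give $\mathbb{P}_{o,o}\big(\rho(B_n,\tilde B_n)<(2(d-1)-3\delta)n\big)\leq 2e^{-K_1 n}+K_2e^{-\delta n/2}$ for any small $\delta>0$. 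The increments $\sup_{s\in[n,n+1]}\rho(B_n,B_s)$ are controlled by a short-time estimate: the radial process is a submartingale with sub-Gaussian marginals (immediate from the heat-kernel bound (\ref{eq:HKEst})), so Doob's maximal inequality yields $\mathbb{P}_{o,o}\big(\sup_{s\in[n,n+1]}\rho(B_n,B_s)>\epsilon n\big)\leq Ce^{-c'n^2}$. Choosing $\delta,\epsilon$ small enough that $2(d-1)-3\delta-2\epsilon>c$ and summing over $n\geq N$ gives the claimed bound (the finitely many small $n$ are absorbed into the constant).

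Granting this, I would conclude as follows. On $E_N$, assumption (\ref{eq:AssumpUpper}) gives, for $t\geq1$,
\[
\int_0^t|f|(B_s,\tilde B_s)\,ds\leq N\|f\|_\infty+C\int_N^t(cs)^{-\alpha}\,ds\leq K\big(N+t^{1-\alpha}\big),
\]
the last step using $\alpha<1$. Hence for $N\geq2$ the corresponding piece is at most $\exp\big(\beta^{2}K(N+t^{1-\alpha})\big)\,\mathbb{P}_{o,o}(E_{N-1}^c)\leq C'e^{\beta^{2}K t^{1-\alpha}}\,e^{(\beta^{2}K-\kappa)N}$, the $E_1$-piece is at most $e^{2\beta^{2}K t^{1-\alpha}}$, and the $\bigcap_N E_N^c$-piece vanishes since $\mathbb{P}_{o,o}(\bigcap_N E_N^c)=\lim_N\mathbb{P}_{o,o}(E_N^c)=0$. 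Provided $\beta^{2}K<\kappa$, i.e. $\beta<\bar\beta_0\triangleq\sqrt{\kappa/K}$, the geometric series converges and one obtains $\mathbf{E}[u(t,o)^2]\leq C_\beta\,e^{2\beta^{2}K t^{1-\alpha}}$ for all $t\geq1$, which is precisely the assertion.

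The main obstacle is the tail estimate $\mathbb{P}_{o,o}(E_N^c)\leq Ce^{-\kappa N}$: one must control $\rho(B_s,\tilde B_s)$ uniformly over the continuum $s\in[N,\infty)$, not merely at integer times, which is exactly what forces the short-time increment bound, and the constant $c$ must be placed strictly below the escape rate $2(d-1)$ while leaving room for the $\delta$- and $\epsilon$-losses. The restriction to small $\beta$ is genuine and expected: on $E_N^c$ the Feynman-Kac exponent may be as large as $N\|f\|_\infty$, so the bad contribution is summable only for $\beta$ small, in accordance with the exponential blow-up for large $\beta$ established in Theorem~\ref{thm2-intro}.
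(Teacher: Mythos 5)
Your proposal is correct in substance but takes a genuinely different route from the paper's. You work directly with the Feynman--Kac exponential, decompose the sample space via the nested events $E_N$ (recording the last unit window on which the pair may fail to have separated at linear speed), and control each piece by combining the deterministic bound $\int_0^t|f|\,ds\le K(N+t^{1-\alpha})$ on $E_N$ with the tail bound $\mathbb P_{o,o}(E_{N-1}^c)\le Ce^{-\kappa(N-1)}$. The price for this cleaner conceptual picture is that $E_N^c$ involves a supremum over the continuum $s\ge N$, forcing you to control the short-time increments $\sup_{s\in[n,n+1]}\rho(B_n,B_s)$; your one-line justification (``submartingale with sub-Gaussian marginals, so Doob'') is the right idea but needs a bit more care --- e.g.\ apply Doob's maximal inequality to the positive submartingale $e^{\theta\rho(B_n,B_{n+\cdot})}$ (valid since $\Delta\rho=(d-1)\coth\rho>0$), bound its exponential moment $\mathbb E[e^{\theta\rho(B_n,B_{n+1})}]\le e^{C\theta^2}$ from (\ref{eq:HKEst}), and optimise in $\theta$ to obtain the Gaussian tail. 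The paper instead expands $\mathbf E[u(t,x)^2]$ into the series (\ref{eq:SeriesRep2Mom}), inserts a single-time localising event $A_{s_k}$ at each integration variable, and sums over subsets $\mathcal I$ of ``good'' indices as in (\ref{eq:Up1})--(\ref{eq:Up3}); that discretisation means only the fixed-time tail $\mathbb P(A_s^c)\le K_1e^{-K_2 s}$ (from Lemmas \ref{lem:FlucEst} and \ref{lem:AngEst}) is needed, and no sup-over-continuum estimate appears, at the cost of a combinatorial bookkeeping step. Both routes produce the same smallness threshold $\beta^2<\kappa/K$ and the same $t^{1-\alpha}$ rate from $\int^t s^{-\alpha}\,ds$; yours is more geometric and direct, while the paper's is lighter on pathwise stochastic analysis.
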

\begin{proof}
We may just assume that $f\geqslant0$ (otherwise, replace $f$ by
$|f|$). According to Lemma \ref{lem:2MomRep} (and using the same
notation over there), one can write 
\begin{equation}
\mathbf{E}[u(t,x)^{2}]=\sum_{n=0}^{\infty}\frac{\beta^{2n}}{n!}\int_{[0,t]^{n}}\mathbb{E}_{x,x}\big[f(B_{s_{1}},\tilde{B}_{s_{1}})\cdots f(B_{s_{n}},\tilde{B}_{s_{n}})\big]ds_{1}\cdots ds_{n}.\label{eq:SeriesRep2Mom}
\end{equation}
To estimate the integral on the right hand side, let $\delta\in(0,2(d-1)/3)$
be a fixed number. For each $s>0,$ we introduce the event 
\[
A_{s}\triangleq\Big\{\xi_{s}>-\delta\sqrt{s},\ \eta_{s}>-\delta\sqrt{s},\ \log\frac{2}{1-\cos\angle B_{s}x\tilde{B}_{s}}\leqslant\delta s\Big\},
\]
where 
\[
\xi_{s}\triangleq\frac{\rho(B_{s},x)-(d-1)s}{\sqrt{s}},\ \eta_{s}\triangleq\frac{\rho(\tilde{B}_{s},x)-(d-1)s}{\sqrt{s}}
\]
and $\angle B_{s}x\tilde{B}_{s}\in[0,\pi]$ denotes the intersection
angle between the geodesics $\overline{xB_{s}}$ and $\overline{x\tilde{B}_{s}}$
at $x$. The main idea is to apply the estimate 
\begin{equation}
f(B_{s_{k}},\tilde{B}_{s_{k}})\leqslant\frac{C}{(1+\rho(B_{s_{k}},\tilde{B}_{s_{k}}))^{\alpha}}\leqslant\frac{C}{(1+Ls_{k})^{\alpha}}\label{eq:fBoundLocal}
\end{equation}
on the event $A_{s_{k}}$ where $L\triangleq2(d-1)-3\delta$, which
is a consequence of the assumption (\ref{eq:AssumpUpper}) and the
reverse triangle inequality (\ref{eq:RevTri}). While on the event
$A_{s_{k}}^{c}$, one simply applies $f\leqslant C$ together with
the estimate that 
\begin{equation}
\mathbb{P}_{x,x}(A_{s}^{c})\leqslant K_{1}e^{-K_{2}s}\ \ \ \forall s>0\label{eq:AsComp}
\end{equation}
for suitable constants $K_{1}>1$, $K_{2}>0$; the last inequality
follows from Lemma \ref{lem:FlucEst} as well as a simple adaptation
of Lemma \ref{lem:AngEst}.

To be more specific, one decomposes the expectation on the right hand side of
(\ref{eq:SeriesRep2Mom}) into
\begin{align*}
 & \mathbb{E}_{x,x}\big[f(B_{s_{1}},\tilde{B}_{s_{1}})\cdots f(B_{s_{n}},\tilde{B}_{s_{n}})\big]\\
 & =\sum_{{\cal I}\subseteq\{1,\cdots,n\}}\mathbb{E}_{x,x}\Big[f(B_{s_{1}},\tilde{B}_{s_{1}})\cdots f(B_{s_{n}},\tilde{B}_{s_{n}});\big(\bigcap_{k\in{\cal I}}A_{s_{k}}\big)\cap\big(\bigcap_{l\in{\cal I}^{c}}A_{s_{l}}^{c}\big)\Big].
\end{align*}
According to the inequality (\ref{eq:fBoundLocal}), one has 
\begin{align*}
\mathbb{E}_{x,x}\big[f(B_{s_{1}},\tilde{B}_{s_{1}})\cdots f(B_{s_{n}},\tilde{B}_{s_{n}})\big] & \leqslant C^{n}\sum_{{\cal I}\subseteq\{1,\cdots,n\}}\prod_{k\in{\cal I}}\frac{1}{(1+Ls_{k})^{\alpha}}\mathbb{P}_{x,x}\big(\bigcap_{l\in{\cal I}^{c}}A_{s_{l}}^{c}\big).
\end{align*}
By integrating this inequality over $[0,t]^{n}$, one finds that 
\begin{align}
 & \int_{[0,t]^{n}}\mathbb{E}_{x,x}\big[f(B_{s_{1}},\tilde{B}_{s_{1}})\cdots f(B_{s_{n}},\tilde{B}_{s_{n}})\big]ds_{1}\cdots ds_{n}\nonumber \\
 & \leqslant C^{n}\sum_{{\cal I}\subseteq\{1,\cdots,n\}}\big(\int_{0}^{t}\frac{ds}{(1+Ls)^{\alpha}}\big)^{|{\cal I}|}\int_{[0,t]^{|{\cal I}^{c}|}}\mathbb{P}_{x,x}\big(\bigcap_{l\in{\cal I}^{c}}A_{s_{l}}^{c}\big)\prod_{l\in{\cal I}^{c}}ds_{l}\nonumber \\
 & \leqslant C^{n}\sum_{{\cal I}\subseteq\{1,\cdots,n\}}\big(\frac{(1+Lt)^{1-\alpha}}{L(1-\alpha)}\big)^{|{\cal I}|}\int_{[0,t]^{|{\cal I}^{c}|}}\mathbb{P}_{x,x}\big(A_{\max\{t_{1},\cdots,t_{|{\cal I}^{c}|}\}}^{c}\big)dt_{1}\cdots dt_{|{\cal I}^{c}|}.\label{eq:Up1}
\end{align}
Denoting $l\triangleq|{\cal I}^{c}|$ to ease notation, the last integral
can further be rewritten as 
\[
\int_{[0,t]^{l}}\mathbb{P}_{x,x}\big(A_{\max\{t_{1},\cdots,t_{l}\}}^{c}\big)dt_{1}\cdots dt_{l}=l!\int_{0<t_{1}<\cdots<t_{l}<t}\mathbb{P}_{x,x}(A_{t_{l}}^{c})dt_{1}\cdots dt_{l}.
\]
According to (\ref{eq:AsComp}), the last integral is estimated as
\begin{align}
 & \int_{0<t_{1}<\cdots<t_{l}<t}\mathbb{P}_{x,x}(A_{t_{l}}^{c})dt_{1}\cdots dt_{l}\nonumber \\
 & \leqslant\int_{0<t_{1}<\cdots<t_{l}<t}K_{1}e^{-K_{2}t_{l}}dt_{1}\cdots dt_{l}\nonumber \\
 & =K_{1}\int_{0}^{t}\frac{t_{l}^{l-1}}{(l-1)!}e^{-K_{2}t_{l}}dt_{l}=\frac{K_{1}}{K_{2}^{l}}\int_{0}^{K_{2}t}\frac{r^{l-1}}{(l-1)!}e^{-r}dr\nonumber \\
 & \leqslant\frac{K_{1}}{K_{2}^{l}}\frac{\Gamma(l)}{(l-1)!}=\frac{K_{1}}{K_{2}^{l}}.\label{eq:Up2}
\end{align}
By substiting (\ref{eq:Up2}) back into (\ref{eq:Up1}) and further
into (\ref{eq:SeriesRep2Mom}), one obtains that 
\begin{align}
\mathbf{E}[u(t,x)^{2}] & \leqslant\sum_{n=0}^{\infty}\frac{(C\beta^{2})^{n}}{n!}\sum_{k=0}^{n}{n \choose k}\Big(\frac{(1+Lt)^{1-\alpha}}{L(1-\alpha)}\Big)^{k}\cdot(n-k)!\frac{K_{1}}{K_{2}^{n-k}}\nonumber \\
 & =K_{1}\sum_{n=0}^{\infty}\Big(\frac{C\beta^{2}}{K_{2}}\Big)^{n}\sum_{k=0}^{n}\frac{1}{k!}\Big(\frac{K_{2}(1+Lt)^{1-\alpha}}{L(1-\alpha)}\Big)^{k}\nonumber \\
 & =K_{1}\sum_{k=0}^{\infty}\frac{1}{k!}\Big(\frac{K_{2}(1+Lt)^{1-\alpha}}{L(1-\alpha)}\Big)^{k}\sum_{n=k}^{\infty}\Big(\frac{C\beta^{2}}{K_{2}}\Big)^{n}.\label{eq:Up3}
\end{align}
It is now clear that as long as 
\[
\Lambda_{\beta}\triangleq\frac{C\beta^{2}}{K_{2}}<1\iff\beta<\bar{\beta}_{0}\triangleq\sqrt{\frac{K_{2}}{C}},
\]
the right hand side of (\ref{eq:Up3}) is convergent and is equal to 
\[
\frac{K_{1}}{1-\Lambda_{\beta}}\sum_{k=0}^{\infty}\frac{1}{k!}\Big(\frac{K_{2}\Lambda_{\beta}(1+Lt)^{1-\alpha}}{L(1-\alpha)}\Big)^{k}=\frac{K_{1}}{1-\Lambda_{\beta}}\exp\Big(\frac{K_{2}\Lambda_{\beta}(1+Lt)^{1-\alpha}}{L(1-\alpha)}\Big).
\]
Therefore, one concludes that 
\[
\mathbf{E}[u(t,x)^{2}]\leqslant\frac{K_{1}}{1-\Lambda_{\beta}}\exp\Big(\frac{K_{2}\Lambda_{\beta}(1+Lt)^{1-\alpha}}{L(1-\alpha)}\Big)
\]for all $t>0$,
provided that $\beta<\bar{\beta}_{0}.$ This completes the proof of the
theorem. 
\end{proof}

\section{Covariance functions with power decay }\label{sec: covar-poly}

On the hyperbolic space $M=\mathbb{H}^d$, the covariance function $f(x,y)$ of
a stationary Gaussian field (i.e. a Gaussian field with $\mathrm{SO}^+(d,1)$-invariant distribution) typically decays exponentially
as $\rho(x,y)\rightarrow\infty$. A natural class of such examples
is that $\xi(x)=\int_{\mathrm{SO}^+(d,1)}f(g^{-1}\cdot x)\Xi(dg)$ where $f:M\rightarrow\mathbb{R}$
is a smooth function with suitable decay at infinity and $\Xi$ is
the white noise on the isometry group $\mathrm{SO}^+(d,1)$. Nonetheless, one can construct stationary Gaussian fields
on $M$ whose covariance functions decay with a given power law.
This is demonstrated in Proposition \ref{prop:GauExam} below. 

We
first recall a basic result about non-negative definite kernels (Schoenberg\textquoteright s
theorem).
\begin{defn}
Let $X$ be a topological space. A continuous function $\Psi:X\times X\rightarrow\mathbb{R}$
is \textit{conditionally of negative type} if it satisfies the following
properties:

\vspace{2mm}\noindent (i) $\Psi(x,x)=0$ for all $x\in X$;

\vspace{2mm}\noindent (ii) $\Psi(x,y)=\Psi(y,x)$ for all $x,y\in X$;

\vspace{2mm}\noindent (iii) For any $n\geqslant1$, $x_{1},\cdots,x_{n}\in X$ and any real
numbers $c_{1},\cdots,c_{n}$ with $\sum c_{i}=0$, one has 
\[
\sum_{i,j=1}^{n}c_{i}c_{j}\Psi(x_{i},x_{j})\leqslant0.
\]
A continuous function $\Phi:X\times X\rightarrow\mathbb{C}$ is \textit{of
positive type} if for any $n\geqslant1$, $x_{1},\cdots,x_{n}\in X$
and any complex numbers $c_{1},\cdots,c_{n}$, one has 
\[
\sum_{i,j=1}^{n}c_{i}\overline{c_{j}}\Phi(x_{i},x_{j})\geqslant0.
\]
\end{defn}
\begin{thm}[{Schoenberg's theorem; cf. \cite[Theorem C.3.2]{BHV09}}]\label{thm:Schoenberg}

Let $X$ be a topological space and $\Psi:X\times X\rightarrow\mathbb{R}$
be a continuous function. Then the following statements are equivalent:

\vspace{2mm}\noindent (i) $\Psi$ is conditionally of negative type;

\vspace{2mm}\noindent (ii) The function $e^{-t\Psi}$ is of positive type for every $t\geqslant0$.

\end{thm}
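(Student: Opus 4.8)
The plan is to prove the two implications separately; the content lies in (i) $\Rightarrow$ (ii), since (ii) $\Rightarrow$ (i) is a short differentiation argument. For (i) $\Rightarrow$ (ii), observe that $t\Psi$ is conditionally of negative type for every $t\geqslant0$ and that $t=0$ yields the constant kernel $1$, which is trivially of positive type; hence it suffices to show that $e^{-\Psi}$ is of positive type. The key step is a \emph{base-point lemma}: for any fixed $x_{0}\in X$, the kernel
\[
\Phi(x,y)\triangleq\Psi(x,x_{0})+\Psi(y,x_{0})-\Psi(x,y),
\]
where we used $\Psi(x_{0},x_{0})=0$, is of positive type. To prove this, given $x_{1},\dots,x_{n}\in X$ and scalars $c_{1},\dots,c_{n}$, adjoin the point $x_{0}$ with coefficient $c_{0}\triangleq-\sum_{i=1}^{n}c_{i}$, so that the augmented coefficients sum to zero; applying the negative-type inequality to this $(n+1)$-point configuration, expanding the quadratic form, and simplifying with $\Psi(x_{0},x_{0})=0$ and the symmetry of $\Psi$ yields precisely $\sum_{i,j=1}^{n}c_{i}c_{j}\Phi(x_{i},x_{j})\geqslant0$ for real $c_{i}$; the complex case reduces to the real one by writing $c_{j}=a_{j}+\mathrm{i}b_{j}$ and using that $\Phi$ is real and symmetric, so that the imaginary part of the quadratic form drops out and the real part splits into the contributions of $(a_{i})$ and $(b_{i})$.

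Granting the lemma, factor
\[
e^{-\Psi(x,y)}=g(x)\,g(y)\,e^{\Phi(x,y)},\qquad g(x)\triangleq e^{-\Psi(x,x_{0})},
\]
where the rank-one real kernel $(x,y)\mapsto g(x)g(y)$ is of positive type. Expanding $e^{\Phi}=\sum_{k\geqslant0}\Phi^{k}/k!$ and invoking Schur's product theorem (the Hadamard product of positive semidefinite matrices is positive semidefinite), each power $\Phi^{k}$ is of positive type, hence so is every partial sum, and positive semidefiniteness of the relevant matrix is preserved under the pointwise limit; thus $e^{\Phi}$ is of positive type. Applying Schur's theorem once more to the product with $g(x)g(y)$ shows that $e^{-\Psi}$ is of positive type, which is (ii).

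For (ii) $\Rightarrow$ (i): given $x_{1},\dots,x_{n}$ and real $c_{i}$ with $\sum_{i}c_{i}=0$, the function $t\mapsto\sum_{i,j}c_{i}c_{j}e^{-t\Psi(x_{i},x_{j})}$ is nonnegative on $[0,\infty)$ (by the positive-type property applied with these real coefficients) and vanishes at $t=0$ because $\sum_{i,j}c_{i}c_{j}=(\sum_{i}c_{i})^{2}=0$; hence its right derivative at $0$, namely $-\sum_{i,j}c_{i}c_{j}\Psi(x_{i},x_{j})$, is $\geqslant0$, which is the defining inequality of conditional negative type. The normalization clauses $\Psi(x,x)=0$ and $\Psi(x,y)=\Psi(y,x)$ are taken as part of the standing hypothesis on $\Psi$ (symmetry, in fact, follows from the positive-type condition by a two-point test), exactly as in \cite[Theorem C.3.2]{BHV09}. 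The only genuinely non-routine ingredient is the base-point lemma that converts the negative-type kernel $\Psi$ into the positive-type kernel $\Phi$; once that is in hand, the rest is Schur's product theorem plus an elementary limiting argument, so there is no real obstacle, and in the present paper the statement will simply be quoted from \cite{BHV09}.
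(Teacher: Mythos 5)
The paper does not prove this theorem at all: it is quoted directly from \cite[Theorem C.3.2]{BHV09} and then used as a black box in the proof of Proposition \ref{prop:GauExam}, so there is no in-paper argument to compare against. Your sketch is the standard proof of Schoenberg's theorem and, as far as I can tell, is the same one given in the cited reference: the base-point lemma producing the positive-type kernel $\Phi(x,y)=\Psi(x,x_{0})+\Psi(y,x_{0})-\Psi(x,y)$ by adjoining $x_{0}$ with coefficient $c_{0}=-\sum_{i}c_{i}$, the factorization $e^{-\Psi(x,y)}=g(x)g(y)e^{\Phi(x,y)}$, Schur's product theorem plus a limiting argument for $e^{\Phi}$, and the one-line differentiation at $t=0^{+}$ for the converse are all correct. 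The only caveat worth recording is one you already flag: as reproduced in the paper the hypotheses ``$\Psi(x,x)=0$ and $\Psi(x,y)=\Psi(y,x)$'' are silently dropped, and they cannot be recovered from (ii) alone (e.g.\ a constant $\Psi\equiv c>0$ satisfies (ii) but has $\Psi(x,x)=c\neq0$); in \cite{BHV09} they are part of the standing assumptions on the kernel, which is how you read it, so the proof as you wrote it is sound.
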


We now give the construction of stationary Gaussian fields on the
hyperbolic space $M$, whose covariance functions admit an exact power
decay. This justifies our earlier assumptions on the covariance function. 
\begin{prop}
\label{prop:GauExam}Let $\alpha>0$ be given fixed. Define $\Psi(x,y)\triangleq\log\cosh\rho(x,y)$.
Then the function 
\begin{equation}
\Phi_{\alpha}(x,y)\triangleq\int_{0}^{1}e^{-u^{1/\alpha}\Psi(x,y)}du,\ \ \ x,y\in M\label{eq:GauExam}
\end{equation}
is of positive type and satisfies 
\[
\lim_{\rho(x,y)\rightarrow\infty}\rho(x,y)^{\alpha}\Phi_{\alpha}(x,y)=\alpha\Gamma(\alpha)
\]
uniformly in $x,y$. In particular, $\Phi_{\alpha}$ defines the covariance
function of a stationary Gaussian field with uniform power decay $\rho(x,y)^{-\alpha}$
as $\rho(x,y)\rightarrow\infty$. 
\end{prop}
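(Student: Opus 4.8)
The plan is to reduce the positive-definiteness of $\Phi_{\alpha}$ to the single structural fact that the kernel $\Psi(x,y)=\log\cosh\rho(x,y)$ is conditionally of negative type on $M=\mathbb{H}^{d}$. Granting this, Schoenberg's theorem (Theorem~\ref{thm:Schoenberg}) shows that $e^{-\tau\Psi}$ is of positive type for every $\tau\geqslant0$; since $u^{1/\alpha}\geqslant0$ for $u\in[0,1]$, each $e^{-u^{1/\alpha}\Psi}$ is of positive type, and for any $x_{1},\dots,x_{n}\in M$ and $c_{1},\dots,c_{n}\in\mathbb{C}$ one has $\sum_{i,j}c_{i}\overline{c_{j}}\Phi_{\alpha}(x_{i},x_{j})=\int_{0}^{1}\big(\sum_{i,j}c_{i}\overline{c_{j}}e^{-u^{1/\alpha}\Psi(x_{i},x_{j})}\big)\,du\geqslant0$, the interchange being legitimate since the integrand is nonnegative and (as $\Psi\geqslant0$) bounded. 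Hence $\Phi_{\alpha}$, which is clearly continuous, real-valued, symmetric and equal to $1$ on the diagonal, is the covariance of a centred Gaussian field on $M$; its law is $\mathrm{SO}^{+}(d,1)$-invariant because $\Phi_{\alpha}$ depends on $(x,y)$ only through $\rho(x,y)$.

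To prove that $\Psi$ is conditionally of negative type --- the crux of the argument --- I would pass to the upper half-space model $M=\{(\xi,t):\xi\in\mathbb{R}^{d-1},\,t>0\}$, in which the hyperbolic distance obeys the standard identity $\cosh\rho\big((\xi,t),(\eta,s)\big)=\big(|\xi-\eta|^{2}+t^{2}+s^{2}\big)/(2ts)$, so that $\Psi\big((\xi,t),(\eta,s)\big)=\log\big(|\xi-\eta|^{2}+t^{2}+s^{2}\big)-\log t-\log s-\log2$. The properties $\Psi(p,p)=0$ and $\Psi(p,q)=\Psi(q,p)$ are immediate. Given points $p_{i}=(\xi_{i},t_{i})$ and reals $c_{i}$ with $\sum_{i}c_{i}=0$, the contributions of $-\log t_{i}-\log t_{j}-\log2$ to $\sum_{i,j}c_{i}c_{j}\Psi(p_{i},p_{j})$ vanish (a constant plus an additively separable kernel), so it remains to show that $(p,q)\mapsto\log\big(|\xi-\eta|^{2}+t^{2}+s^{2}\big)$ is conditionally of negative type. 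Setting $a_{ij}\triangleq|\xi_{i}-\xi_{j}|^{2}+t_{i}^{2}+t_{j}^{2}>0$ and using Frullani's formula $\log a=\int_{0}^{\infty}\frac{e^{-r}-e^{-ar}}{r}\,dr$, one finds, after cancelling $\sum_{i}c_{i}=0$, that $\sum_{i,j}c_{i}c_{j}\log a_{ij}=-\int_{0}^{\infty}\frac{1}{r}\big(\sum_{i,j}c_{i}c_{j}e^{-r a_{ij}}\big)\,dr$ (the integral converges: near $r=0$ the integrand equals $\sum_{i,j}c_{i}c_{j}a_{ij}+O(r)$, and it decays exponentially as $r\to\infty$). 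Finally, for each $r>0$ the matrix $(e^{-ra_{ij}})_{i,j}$ is the entrywise product of the Gaussian kernel $(e^{-r|\xi_{i}-\xi_{j}|^{2}})_{i,j}$ on $\mathbb{R}^{d-1}$, which is positive semidefinite, with the rank-one positive semidefinite matrix $(e^{-rt_{i}^{2}}e^{-rt_{j}^{2}})_{i,j}$; by the Schur product theorem it is positive semidefinite, so $\sum_{i,j}c_{i}c_{j}e^{-ra_{ij}}\geqslant0$ and therefore $\sum_{i,j}c_{i}c_{j}\log a_{ij}\leqslant0$, as required.

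For the power-decay asymptotics I would substitute $v=u^{1/\alpha}$ to rewrite $\Phi_{\alpha}(x,y)=\alpha\int_{0}^{1}v^{\alpha-1}e^{-v\Psi(x,y)}\,dv$, and then set $w=v\Psi(x,y)$ to obtain $\Phi_{\alpha}(x,y)=\alpha\,\Psi(x,y)^{-\alpha}\,\gamma\big(\alpha,\Psi(x,y)\big)$, where $\gamma(\alpha,\cdot)$ denotes the lower incomplete Gamma function. Since $\log\cosh\rho=\rho-\log2+o(1)$ as $\rho\to\infty$, one has $\Psi(x,y)\to\infty$ with $\Psi(x,y)/\rho(x,y)\to1$, while $\gamma(\alpha,\Psi)\to\Gamma(\alpha)$; hence $\rho(x,y)^{\alpha}\Phi_{\alpha}(x,y)=\alpha\Gamma(\alpha)\big(\rho(x,y)/\Psi(x,y)\big)^{\alpha}(1+o(1))\to\alpha\Gamma(\alpha)$, and since the left-hand side is a function of $\rho(x,y)$ alone this limit is automatically uniform in $x,y$.

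The main obstacle is the conditional negative-definiteness of $\Psi$: in negatively curved geometry the naive analogue of the Euclidean Gaussian construction breaks down, so this property must be established by hand. The device that makes it work is the half-space distance formula, which exhibits $\Psi$ (up to an additively separable correction) as the logarithm of the positive kernel $|\xi-\eta|^{2}+t^{2}+s^{2}$, combined with the factorization $e^{-r(|\xi-\eta|^{2}+t^{2}+s^{2})}=e^{-r|\xi-\eta|^{2}}\cdot e^{-rt^{2}}e^{-rs^{2}}$ and the Schur product theorem; the remaining steps are routine.
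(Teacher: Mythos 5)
Your proposal is correct, and the asymptotic computation (substituting $v=u^{1/\alpha}$ and then $w=v\Psi$ to express $\Phi_\alpha$ via the lower incomplete Gamma function, then using $\log\cosh\rho\sim\rho$ and $\gamma(\alpha,\cdot)\to\Gamma(\alpha)$) is essentially identical to the paper's. The genuine difference is in how you establish that $\Psi=\log\cosh\rho$ is conditionally of negative type: the paper simply cites this as a known fact from \cite[Theorem~2.11.3]{BHV09} and passes directly to Schoenberg, whereas you give a self-contained derivation. Your route --- rewriting $\Psi$ in the upper half-space model as $\log(|\xi-\eta|^2+t^2+s^2)$ modulo an additively separable correction that is annihilated by $\sum_ic_i=0$, then applying Frullani's formula to reduce to the positivity of the kernel $e^{-r(|\xi-\eta|^2+t^2+s^2)}=e^{-r|\xi-\eta|^2}\,e^{-rt^2}e^{-rs^2}$, which follows from the Schur product theorem applied to a Euclidean Gaussian kernel and a rank-one kernel --- is correct and pleasantly elementary. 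It buys a proof that does not rely on the representation-theoretic machinery underlying the cited theorem (affine isometric actions of $O(d,1)$ on Hilbert space), at the modest cost of working in a specific model of $\mathbb{H}^d$ rather than intrinsically. One small slip worth fixing: near $r=0$ the integrand $\tfrac1r\sum_{i,j}c_ic_je^{-ra_{ij}}$ tends to $-\sum_{i,j}c_ic_ja_{ij}$, not $+\sum_{i,j}c_ic_ja_{ij}$; the sign does not affect convergence of the Frullani integral, which is what you need.
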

\begin{proof}
It is known from \cite[Theorem 2.11.3]{BHV09} that $\Psi$ is conditionally
of negative type. According to Theorem \ref{thm:Schoenberg}, the
function $(x,y)\mapsto e^{-u^{1/\alpha}\Psi(x,y)}$ is of positive
type for every fixed $u\in[0,1]$. In particular, the function $\Phi_{\alpha}(x,y)$
defined by (\ref{eq:GauExam}) is of positive type. To compute its
decay rate, by applying a change of variables $v=u^{1/\alpha}\Psi$
one finds that 
\[
\Phi_{\alpha}(x,y)=\frac{\alpha}{\Psi(x,y)^{\alpha}}\int_{0}^{\Psi(x,y)}e^{-v}v^{\alpha-1}dv.
\]
It follows that 
\[
\rho(x,y)^{\alpha}\Phi_{\alpha}(x,y)=\frac{\alpha\rho(x,y)^{\alpha}}{\Psi(x,y)^{\alpha}}\int_{0}^{\Psi(x,y)}e^{-v}v^{\alpha-1}dv\rightarrow\alpha\Gamma(\alpha)
\]
as $\rho(x,y)\rightarrow\infty$. Since $\Phi_{\alpha}(x,y)$ is a
function of $\rho(x,y)$ only, it is clear that any Gaussian field
with covariance function $\Phi_{\alpha}(x,y)$ is stationary. 
\end{proof}

\end{document}